\theoremstyle{plain}
\newtheorem{theorem}{Theorem}[section]
\newtheorem*{theorem*}{Theorem}
\newtheorem{lemma}[theorem]{Lemma}
\newtheorem{proposition}[theorem]{Proposition}
\newtheorem*{remark*}{Remark}
\newtheorem{itheorem}{Theorem}
\newtheorem{icorollary}[itheorem]{Corollary}
\theoremstyle{definition}
\numberwithin{equation}{section}
\newcommand{\NN}{\mathbf{N}}
\newcommand{\RR}{\mathbf{R}}
\newcommand{\ZZ}{\mathbf{Z}}
\newcommand{\se}{\subseteq}
\newcommand{\fhi}{\varphi}
\newcommand{\teta}{\vartheta}
\newcommand{\ww}[1]{|#1|_{\mathrm{w}}}
\newcommand{\pf}{\mathscr{P}_\mathrm{f}}
\newcommand{\full}[1]{[[#1]]}
\newcommand{\sbar}{\mbox{\footnotesize $|$}}
\begin{document}

\title[Cantor systems, piecewise translations and simple amenable groups]{Cantor systems, piecewise translations\\ and simple amenable groups}
\author[K. Juschenko N. Monod]{Kate Juschenko and Nicolas Monod}
\address{EPFL, 1015 Lausanne, Switzerland}
\thanks{Work supported in part by the European Research Council and the Swiss National Science Foundation.}
%
\begin{abstract}
We provide the first examples of finitely generated simple groups that are amenable (and infinite). This follows from a general existence result on invariant states for piecewise-translations of the integers. The states are obtained by constructing a suitable family of densities on the classical Bernoulli space.
\end{abstract}
\maketitle

\section{Introduction}
A \emph{Cantor system} $(T,C)$ is a homeomorphism $T$ of the Cantor space $C$; it is called \emph{minimal} if $T$ admits no proper invariant closed subset. The \emph{topological full group} $\full{T}$ of a Cantor system is the group of all homeomorphisms of $C$ which are given piecewise by powers of $T$, each piece being open in $C$. This countable group is a complete invariant of flip-conjugacy for $(T,C)$ by a result of Giordano--Putnam--Skau~\cite[Cor.~4.4]{Giordano-Putnam-Skau99}.

\medskip
It turns out that this construction yields very interesting groups $\full{T}$. Indeed, Matui proved that the commutator subgroup of $\full{T}$ is simple for any minimal Cantor system, see Theorem~4.9 in~\cite{Matui06} and the remark preceding it. Moreover, he showed that this simple group is finitely generated if and only if $(T,C)$ is (conjugated to) a minimal subshift. This yields a new uncountable family of non-isomorphic finitely generated simple groups since subshifts can be distinguished by their entropy; see~\cite[p.~246]{Matui06}.

\medskip
Until now, no example of finitely generated simple group that is  \emph{amenable} (and infinite) was known. Grigorchuk--Medynets~\cite{Grigorchuk-Medynets_v3} have proved that the topological full group $\full{T}$ of a minimal Cantor system $(T,C)$ is locally approximable by finite groups in the Chabauty topology. They conjectured that $\full{T}$ is amenable; our first result confirms this conjecture.

\begin{itheorem}\label{thm:main}
The topological full group of any minimal Cantor system is amenable.
\end{itheorem}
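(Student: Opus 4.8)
The plan is to reduce Theorem~\ref{thm:main} to a statement about permutations of $\ZZ$ and then to prove that statement by a probabilistic construction on the Bernoulli space; the reduction and the passage back to amenability are soft, and all the work sits in the construction. \emph{Reduction.} Since a group is amenable as soon as each of its finitely generated subgroups is, it suffices to show that every finitely generated $H\le\full{T}$ is amenable. Fix $x\in C$: by minimality its $T$-orbit is infinite and $n\mapsto T^n x$ is a bijection from $\ZZ$ onto it, which we identify with $\ZZ$. If $g\in\full{T}$ equals $T^{n_i}$ on the pieces $C_i$ of a finite clopen partition of $C$, then on $\ZZ$ it acts by $n\mapsto n+n_{i(n)}$, where $T^n x\in C_{i(n)}$; in particular the displacement $n\mapsto g(n)-n$ takes finitely many values. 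This gives a homomorphism from $\full{T}$ to the group of bounded-displacement permutations of $\ZZ$ — the ``piecewise translations'' — injective because the orbit of $x$ is dense. So it is enough to show: \emph{every finitely generated group $H$ of piecewise translations of $\ZZ$ is amenable.} Minimality plays no further role, and the recurrence used below — of the simple random walk on $\ZZ$ — is available for free from one-dimensionality, so nothing extra needs checking.

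\emph{From a combinatorial mean to amenability of $H$.} I would deduce amenability of $H$ from the statement that the action $H\curvearrowright\ZZ$ is \emph{extensively amenable}: there is an $H$-invariant mean $m$ on the countable set $\pf(\ZZ)$ of finite subsets of $\ZZ$ with $m\big(\{E\in\pf(\ZZ):F\se E\}\big)=1$ for every finite $F\se\ZZ$. Granting this, one upgrades to amenability of $H$ formally: every piecewise translation agrees, on any prescribed finite set, with an element of the amenable group $\mathrm{Sym}_{\mathrm f}(\ZZ)\rtimes\ZZ$, and combining $m$ with an invariant mean of this amenable group yields an invariant mean on $\ell^\infty(H)$. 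The entire difficulty is thus concentrated in producing $m$.

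\emph{The crux: densities on the Bernoulli space.} Equivalently, I must exhibit finitely supported probability measures $\nu_k$ on $\pf(\ZZ)$ with $\|h_*\nu_k-\nu_k\|_1\to0$ for every $h\in H$ and $\nu_k\big(\{E:F\se E\}\big)\to1$ for every finite $F$, and pass to a weak-$*$ limit. A Følner-type family of ``clouds'' adapted to translations — which settles $\ZZ$ acting on itself easily — does not survive an arbitrary piecewise translation $h$, whose displacement oscillates at all scales, so one needs a genuinely correlated family. The device is the classical Bernoulli space $\big(\{0,1\}^{\ZZ},\mu\big)$ with $\mu$ the uniform product measure, on which $H$ acts by permuting coordinates and preserving $\mu$: one designs a sequence of densities $f_k\ge0$, $\int f_k\,d\mu=1$, reads $E$ off a $\mu$-random configuration re-weighted by $f_k$ (via a finite modification of its support, with the scale randomized so that bounded boundary effects are averaged away), and the requirement becomes $\|f_k\circ h^{-1}-f_k\|_{L^1(\mu)}\to0$ while $E$ still covers any prescribed finite set with probability $\to1$. \emph{The verification of this $L^1$-almost-invariance is the main obstacle.} It is precisely here that recurrence enters: on $\ZZ$ a diffusing cloud absorbs a bounded perturbation $h$ and refills the holes it opens, which is what lets the densities be made almost invariant, whereas the scheme would fail on $\ZZ^3$ where the walk is transient. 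Everything outside this estimate is reduction and routine manipulation of means.
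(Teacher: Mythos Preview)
There is a genuine gap in your reduction. You assert that after embedding $\full{T}$ into $W(\ZZ)$, ``minimality plays no further role'' and that it suffices to prove every finitely generated group of piecewise translations of $\ZZ$ is amenable. That target statement is false: as the paper itself recalls, $W(\ZZ)$ contains non-abelian free groups (this goes back to Schreier). Since extensive amenability of $W(\ZZ)\curvearrowright\ZZ$ --- which you correctly isolate as the technical core, and which \emph{is} true --- passes to subgroups, your outline would prove that a free subgroup of $W(\ZZ)$ is amenable. So the passage from the invariant mean on $\pf(\ZZ)$ to amenability of $H$ cannot be as formal as you claim.

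The specific step that fails is your upgrade: ``every piecewise translation agrees, on any prescribed finite set, with an element of $\mathrm{Sym}_{\mathrm f}(\ZZ)\rtimes\ZZ$''. This is vacuous, because $\mathrm{Sym}_{\mathrm f}(\ZZ)$ alone realises any injection between finite subsets of $\ZZ$ of equal size; the property therefore holds for \emph{every} permutation group of $\ZZ$ and cannot be what singles out $\full{T}$. What the paper actually does is use minimality a second time: the image of $\full{T}$ in $W(\ZZ)$ satisfies the \emph{ubiquitous pattern property}, which forces the stabiliser of every set of the form $E\triangle\NN$ (with $E$ finite) to be locally finite. One then embeds $G$ into $\pf(\ZZ)\rtimes G$ by the cocycle $g\mapsto\big(\NN\triangle g(\NN),\,g\big)$, so that stabilisers for the affine action on $\pf(\ZZ)$ become exactly these locally finite groups; amenability of $G$ then follows from the invariant mean on $\pf(\ZZ)$ together with amenability of stabilisers. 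Your sketch is missing precisely this second, essential use of minimality.
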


\noindent
Surprisingly, this statement fails as soon as one allows two commuting homeomorphisms~\cite{Elek-Monod}.

\bigskip
Combining Theorem~\ref{thm:main} with the above-mentioned results from~\cite{Giordano-Putnam-Skau99,Matui06} we deduce:

\begin{icorollary}
There exist finitely generated simple groups that are infinite amenable. In fact, there are $2^{\aleph_0}$ non-isomorphic such groups.\qed
\end{icorollary}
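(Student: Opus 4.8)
Since Theorem~\ref{thm:main} is already available, the Corollary is immediate: by Matui's results the commutator subgroup $[\full{T},\full{T}]$ is simple for every minimal Cantor system, it is finitely generated exactly when $(T,C)$ is a minimal subshift, and subshifts produce continuum many isomorphism types (distinguished by entropy); each such group is infinite and, being a subgroup of $\full{T}$, amenable by Theorem~\ref{thm:main}. So the real task is Theorem~\ref{thm:main}, for which I propose the following.

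\emph{Reduction to a finitely generated group of piecewise translations of $\ZZ$.} Amenability is a local property, so it suffices to show that every finitely generated subgroup $\Gamma\le\full{T}$ is amenable; this step genuinely matters, since $\full{T}$ itself is usually not finitely generated while the estimates below need a uniform bound on the elements involved. Fix $x\in C$; by minimality its $T$-orbit is infinite, and via $n\leftrightarrow T^nx$ we identify it with $\ZZ$. Restriction to this dense orbit is injective on $\full{T}$ because its elements are homeomorphisms, and since each element is given piecewise by finitely many powers of $T$, it acts on $\ZZ$ as a \emph{piecewise translation} $n\mapsto n+c(n)$ with $c$ finitely valued. Choosing a finite generating set $S$ of $\Gamma$ we obtain a single $D$ with $|c_\gamma|\le D$ for all $\gamma\in S$. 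It thus remains to prove the general statement announced in the abstract: a finitely generated group acting on $\ZZ$ by piecewise translations is amenable.

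\emph{The Bernoulli space and the two competing features.} Let $\Gamma$ act on the Bernoulli space $X=\{0,1\}^{\ZZ}$ by permuting coordinates, $(\gamma\cdot\omega)(n)=\omega(\gamma^{-1}n)$; since permuting coordinates preserves product measures, every Bernoulli measure $\mu_p$ ($0<p<1$) is $\Gamma$-invariant. The argument plays two features of this action against one another. The first is \emph{one-dimensionality}: for $\gamma\in S$ the interval $[-N,N]$ satisfies $|\gamma[-N,N]\,\triangle\,[-N,N]|\le 4D$, a bound \emph{independent of $N$}, so intervals are almost invariant as subsets of $\ZZ$ \emph{with bounded boundary}; this is exactly what separates $\ZZ$ from $\ZZ^d$ with $d\ge 2$ (whose piecewise-translation groups are not amenable) and what will force amenability. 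The second is \emph{amenable isotropy}: for $\mu_p$-almost every $\omega$ the stabiliser $\Gamma_\omega$ is contained in the subgroup of finitely supported elements of $\Gamma$, which is locally finite, hence amenable.

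\emph{Densities, conclusion, and the main obstacle.} Exploiting one-dimensionality, I would construct a family of probability densities $f_t$ on the spaces $(X,\mu_{p_t})$ — supported on configurations that are eventually constant in each direction and organised by the position of their ``interface'' — that are asymptotically $\Gamma$-invariant, $\|\gamma\cdot f_t-f_t\|_{L^1(\mu_{p_t})}\to 0$ for every $\gamma\in\Gamma$, yet remain non-degenerate (they do not collapse onto the constant density). Granting such a family, a weak-$*$ limit of the associated states is a $\Gamma$-invariant state concentrated on configurations whose interface has drifted out to infinity in both directions, so that its stabilisers lie in the locally finite subgroup above; running the standard ``invariant mean on a set with amenable stabilisers $\Rightarrow$ amenable group'' argument then shows $\Gamma$, and hence $\full{T}$, is amenable. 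The main obstacle is exactly the construction of these densities: asymptotic invariance pushes one to smear the interface over an enormous — ideally infinite — region so that a bounded perturbation becomes cheap, while non-degeneracy (keeping the limiting state a genuine probability measure, and keeping its stabilisers amenable) pushes back against letting the mass escape. Threading this needle — choosing the right product measures $\mu_{p_t}$ and the right densities on the classical Bernoulli space so that the $L^1$-defects truly tend to $0$ while the limit stays under control — is the heart of the matter; everything else is soft.
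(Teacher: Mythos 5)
Your opening paragraph is exactly the paper's own proof of the Corollary: combine Theorem~\ref{thm:main} with Matui's theorems (simplicity of the commutator subgroup of $\full{T}$, finite generation precisely for minimal subshifts, entropy distinguishing $2^{\aleph_0}$ isomorphism classes) and the fact that amenability passes to subgroups. Taking Theorem~\ref{thm:main} as given, that deduction is complete and identical to the paper's.

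The problem lies in the sketch you then offer for Theorem~\ref{thm:main} itself. The statement you reduce to --- ``a finitely generated group acting on $\ZZ$ by piecewise translations is amenable'' --- is false: as the paper itself recalls, $W(\ZZ)$ contains non-abelian free subgroups (Schreier), so bounded displacement alone cannot force amenability, and the same remark undercuts the parenthetical contrast with $\ZZ^d$, $d\geq 2$. What is true about all of $W(\ZZ)$ is only Theorem~\ref{thm:wobbling}: an invariant mean for the action on $\pf(\ZZ)$ giving full weight to the sets containing $0$, obtained from the explicit densities $f_n(x)=\exp\big(-n\sum_{j\in\ZZ} x_j e^{-|j|/n}\big)$ and the estimates of Theorem~\ref{thm:estimate} --- precisely the construction your sketch defers as ``the main obstacle''. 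Amenability of $\full{T}$ then requires a second, minimality-specific input that your reduction discards: by the ubiquitous pattern property (Lemma~\ref{lem:ubi}) the stabiliser in $\pi_p(\full{T})$ of every set $E\triangle\NN$ is locally finite (Lemma~\ref{lem:loc_fin}), and one concludes via the twisted embedding $g\mapsto\big(\NN\triangle g(\NN),g\big)$ together with Lemma~\ref{lem:amen} (Proposition~\ref{prop:twist}). Your substitute for this input --- that $\mu_p$-almost every stabiliser lies in the locally finite subgroup of finitely supported elements --- is true but cannot carry the argument: an invariant probability measure with almost surely amenable stabilisers does not imply amenability (Bernoulli actions of free groups are measure-preserving and essentially free), and the stabilisers that actually matter are those of the singular ``interface'' configurations, i.e.\ of the sets $E\triangle\NN$; their amenability is exactly where minimality enters, and for a free subgroup of $W(\ZZ)$ some such stabiliser must fail to be amenable. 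So the proposed route proves a false general statement and cannot close without restoring both the density construction and the ubiquitous-pattern/locally-finite-stabiliser step.
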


In order to prove Theorem~\ref{thm:main}, we reformulate the problem in terms of the group $W(\ZZ)$ of \emph{piecewise-translations} of the integers. More precisely, we denote by $W(\ZZ)$ the group of all those bijections $g$ of $\ZZ$ for which the quantity
$$\ww{g} :=\sup\big\{| g (j)-j|:j\in\ZZ \big\}$$
is finite. The topological full group of any minimal Cantor system $(T,C)$ can be embedded into $W(\ZZ)$ by identifying a $T$-orbit with $\ZZ$. However, $W(\ZZ)$ also contains many other groups, including non-abelian free groups. This fact can be traced back to Schreier's 1927 proof of the residual finiteness of free groups, see~\S\,2 in~\cite{Schreier27} (or~\cite{vanDouwen} for a more modern viewpoint).

\medskip
We shall introduce a model for random finite subsets of $\ZZ$ which has the following two properties: (i)~the model is almost-invariant under shifts by piecewise-translations; (ii)~a random finite set contains~$0$ with overwhelming probability. Theorem~\ref{thm:main} is proved using a general result about $W(\ZZ)$ which has the following equivalent reformulation.

\begin{itheorem}\label{thm:wobbling}
The $W(\ZZ)$-action on the collection of finite sets of integers admits an invariant mean which gives full weight to the collection of sets containing~$0$.
\end{itheorem}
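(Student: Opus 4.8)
The plan is to prove Theorem~\ref{thm:wobbling} directly, realising the invariant mean as a weak-$*$ cluster point of an explicit sequence of probability measures on $\pf(\ZZ)$. The standard reduction applies: the convex set of means on $\pf(\ZZ)$ is weak-$*$ compact and carries an affine $W(\ZZ)$-action, so it suffices to produce probability measures $\mu_n$ on $\pf(\ZZ)$ such that (i) $\mu_n(\{A : 0\notin A\})\to 0$ and (ii) $\|g_*\mu_n-\mu_n\|_1\to 0$ for each fixed $g\in W(\ZZ)$. Indeed, any weak-$*$ cluster point $m$ of $(\mu_n)$ is then the desired mean: (ii) forces $m(g\cdot f)=m(f)$ for all $g$ and all $f\in\ell^\infty(\pf(\ZZ))$, while (i) forces $m(\one_{\{0\in A\}})=1$.

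For the measures I would use the crudest possible model. Fix $n\ge 1$, put $\fhi_n(j)=\max\{0,\,1-|j|/n\}$ for $j\in\ZZ$, and let $\mu_n$ be the law of the random finite subset of $\ZZ$ in which each $j$ is retained independently with probability $\fhi_n(j)$. This set is almost surely finite since $\sum_j\fhi_n(j)=n<\infty$, and $\mu_n(\{A:0\in A\})=\fhi_n(0)=1$, so (i) is automatic. Identifying $\pf(\ZZ)$ with the finitely supported elements of $\{0,1\}^{\ZZ}$, the measure $\mu_n$ is the product $\bigotimes_{j\in\ZZ}\mathrm{Ber}(\fhi_n(j))$; since $g\in W(\ZZ)$ acts by the coordinate permutation $(g\omega)_j=\omega_{g^{-1}(j)}$, the pushforward $g_*\mu_n=\bigotimes_{j}\mathrm{Ber}(\fhi_n(g^{-1}j))$ is again a product measure, agreeing with $\mu_n$ in all but finitely many coordinates.

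The crux --- and the step I expect to be the main obstacle --- is bounding $\|g_*\mu_n-\mu_n\|_1$. The naive coordinatewise coupling only gives $\|g_*\mu_n-\mu_n\|_1\le 2\sum_j|\fhi_n(j)-\fhi_n(g^{-1}j)|$, and for the tent profile this sum is of order $\ww{g}$, bounded away from $0$ uniformly in $n$; the same failure occurs for any profile concentrating at $0$, since shifting the bulk of $\fhi_n$ by $\ww{g}$ always costs an $\ell^1$-amount of order $\ww{g}$. The remedy is to measure closeness by the Hellinger distance, which \emph{tensorises additively}: $H^2(\bigotimes P_j,\bigotimes Q_j)\le\sum_j H^2(P_j,Q_j)$. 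Writing $v_n=\sqrt{\fhi_n}$ and $u_n=\sqrt{1-\fhi_n}$, this yields the finite sum $H^2(\mu_n,g_*\mu_n)\le\sum_j\big[(v_n(j)-v_n(g^{-1}j))^2+(u_n(j)-u_n(g^{-1}j))^2\big]$. With $R:=\ww{g}$, writing each difference $u_n(g^{-1}j)-u_n(j)$ as a telescoping sum of at most $R$ unit-step increments and applying Cauchy--Schwarz, then summing over $j$ and exchanging the order of summation, bounds the right-hand side by $O(R^2)\sum_{k\in\ZZ}\big[(v_n(k+1)-v_n(k))^2+(u_n(k+1)-u_n(k))^2\big]$. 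The key arithmetic input is that this last sum is small: since $u_n(k)=\sqrt{\min(1,|k|/n)}$ we get $(u_n(k+1)-u_n(k))^2=\tfrac1n(\sqrt{k+1}-\sqrt{k})^2\le\tfrac1{4nk}$ for $1\le k\le n$, whence $\sum_k(u_n(k+1)-u_n(k))^2=O\!\left(\tfrac{\log n}{n}\right)$, and symmetrically for $v_n$. Therefore $H^2(\mu_n,g_*\mu_n)=O\!\left(R^2\tfrac{\log n}{n}\right)\to 0$, and since $\|g_*\mu_n-\mu_n\|_1=2\,\mathrm{TV}(\mu_n,g_*\mu_n)\le 2\,H(\mu_n,g_*\mu_n)$, condition (ii) follows; a weak-$*$ cluster point of $(\mu_n)$ then proves the theorem.

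In short, the argument hinges on two choices whose combination does all the work: encoding finite sets as Bernoulli configurations, so that a piecewise-translation becomes a mere coordinate permutation; and quantifying invariance in the Hellinger rather than the $\ell^1$ metric. The bounded ``boundary defect'' of a wobbling --- finitely many coordinates perturbed by an $O(1)$ amount --- is then diluted over the $\Theta(n)$ coordinates on which $\fhi_n$ varies, and the square root in the Hellinger distance is exactly gentle enough (the tent's square root being H\"older-$\tfrac12$ with the right constant) for the total to vanish as $n\to\infty$. A careful writeup should still verify the routine points: that $\mu_n$ and $g_*\mu_n$ live on $\pf(\ZZ)$, the product formula for $g_*\mu_n$, the inequality $\mathrm{TV}\le H$, and that condition (i) really does pass to the cluster point.
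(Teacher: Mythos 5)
Your argument is correct, and it takes a genuinely different route from the paper. You build the almost-invariant probability measures on $\pf(\ZZ)$ directly, as laws of product-Bernoulli random subsets with the compactly supported tent profile $\fhi_n$, so that full weight on sets containing $0$ is built in from the start, and you quantify almost-invariance by Hellinger tensorisation: the telescoping/Cauchy--Schwarz step gives $O(\ww{g}^2)\sum_k\big[(v_n(k+1)-v_n(k))^2+(u_n(k+1)-u_n(k))^2\big]=O\big(\ww{g}^2\log n/n\big)$, which indeed tends to $0$ for each fixed $g$, and the routine points you flag (pushforward is again a product measure, $\|\cdot\|_1\le 2(\sum_j[\cdots])^{1/2}$, passage of both conditions to a weak-* cluster point) all check out. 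The paper instead works on the Bernoulli space $\{0,1\}^{\ZZ}$ with the $L^2$ densities $f_n(x)=\exp(-n\sum_j x_je^{-|j|/n})$, proves $\langle g(f_n),f_n\rangle/\|f_n\|^2\to1$ and $\|f_n\sbar_{x_0=0}\|/\|f_n\|\to1$ (Theorem~\ref{thm:estimate}), and then transfers this to $\pf(\ZZ)$ via Pontryagin duality (Lemma~\ref{lem:Fourier}), where the weight at $0$ first comes out as $1/2$ and is boosted to full weight by unions of independent copies. The delicate point there is that the first-order term $F_n(g)$ is a signed series with no absolute convergence, handled by Abel summation and the counting Lemma~\ref{lem:B}, which exploits bijectivity of $g$; your square-root (Hellinger) formulation sidesteps this entirely, since the increments of $\sqrt{\fhi_n}$ and $\sqrt{1-\fhi_n}$ are already $\ell^2$-small and only absolute estimates are needed. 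What the paper's formulation buys is the $L^2$/almost-invariant-vector language of Lemma~\ref{lem:Fourier}, which is reused for Proposition~\ref{prop:twist} and Theorem~\ref{thm:main}; your measures serve that purpose just as well, since they verify condition~(iv) of that lemma directly, and after Fourier transform the paper's vectors also correspond to product-type random subsets, so the real difference is your choice of profile and the more elementary estimation scheme.
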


Notice that for any given finite set $E\se\ZZ$, a mean as in Theorem~\ref{thm:wobbling} will give full weight to the collection of sets containing~$E$.

\subsection*{Acknowledgements}
Part of this work was done when the authors enjoyed the hospitality of the Mittag-Leffler institute. We are indebted to G.~Elek for inspiring conversations. We thank Y.~de Cornulier and J.~Peterson for useful comments on an earlier draft. The possibility that topological full groups of minimal Cantor systems could be amenable was suggested by R.~Grigorchuk and K.~Medynets.

\section{Semi-densities on the Bernoulli shift}\label{sec:ai}
The technical core of our construction is a family of $L^2$-functions $f_n$ on the classical Bernoulli space $\{0,1\}^{\ZZ }$. The relevance of these functions will be explained in Section~\ref{sec:actions}.

\bigskip
For any $n\in \NN$, we define
$$f_n\colon \{0,1\}^{\ZZ } \longrightarrow (0,1], \kern5mm f_n(x)=\exp\Bigg(-n\sum\limits_{j\in\ZZ }x_je^{-|j|/n}\Bigg),$$
where $x=\{x_j\}_{j\in\ZZ }\in \{0,1\}^{\ZZ }$. We consider $f_n$ as an element of the Hilbert space $L^2(\{0,1\}^{\ZZ })$, where $\{0,1\}^{\ZZ }$ is endowed with the symmetric Bernoulli measure. The interest of the family $f_n$ is that it satisfies the following two properties, each of which would be elementary to obtain separately.

\begin{theorem}\label{thm:estimate}
For any $g\in W(\ZZ)$ we have  $\langle  g (f_n),f_n\rangle / \|f_n\|^2\rightarrow 1$ as $n\rightarrow \infty$. Moreover, $\|f_n\sbar_{x_0=0} \|/\|f_n\| \to 1$.
\end{theorem}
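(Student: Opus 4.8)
The plan is to reduce both statements to the computation of a single kind of expectation, namely $\mathbb{E}\big[\exp(-\sum_j a_j x_j)\big]$ over the symmetric Bernoulli measure, which factorizes as $\prod_j \frac{1+e^{-a_j}}{2}$ because the coordinates $x_j$ are i.i.d.\ with values in $\{0,1\}$. Here the relevant weights will be of the form $a_j = c\, n\, e^{-|j|/n}$ for various constants $c$, and the products will be controlled by comparing $\sum_j n\, e^{-|j|/n}$ to a geometric sum: this quantity is asymptotic to a constant times $n^2$, which is the scale that makes the argument work. The guiding heuristic is that $f_n$ is a product of nearly-independent factors, each close to $1$, spread over a window of width $\sim n$ around the origin, so shifting the configuration by a bounded amount $\ww{g}$ perturbs only a negligible fraction of the relevant factors.

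First I would expand $\|f_n\|^2 = \mathbb{E}[f_n^2] = \prod_{j\in\ZZ}\frac{1+\exp(-2n e^{-|j|/n})}{2}$ and, similarly, for $g\in W(\ZZ)$ write $g(f_n)(x) = f_n(g^{-1}x)$ and expand $\langle g(f_n), f_n\rangle = \mathbb{E}\big[\exp\big(-n\sum_j x_j(e^{-|g(j)|/n} + e^{-|j|/n})\big)\big]$, again a product over $j$. Then I would take logarithms and estimate the difference $\log\langle g(f_n),f_n\rangle - \log\|f_n\|^2$ as a sum over $j$ of terms that are nonzero only where $|g(j)|\neq |j|$, using $|g(j)-j|\le\ww{g}$; for such $j$ the two exponentials $e^{-|g(j)|/n}$ and $e^{-|j|/n}$ differ by a factor $1+O(\ww{g}/n)$, and since the individual summands $n e^{-|j|/n}$ are bounded, each term in the log-difference is $O(\ww{g}/n \cdot \text{(its unperturbed size)})$; summing against $\sum_j n e^{-|j|/n} \asymp n^2$ gives a total of order $\ww{g}\cdot n \to \infty$ — so I'd have to be more careful and instead bound the log-difference termwise by something summable. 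The right bookkeeping is that the difference of the two product formulas, divided by $\|f_n\|^2$, equals $\prod_j (1 + \delta_j)$ where each $\delta_j$ is the relative change in the $j$-th factor; one shows $\sum_j |\delta_j|\to 0$. Because $\delta_j = 0$ unless $||g(j)|-|j||$ is nonzero (which happens on a set of density zero once we also use that $g(j)=j$ outside a bounded set is false in general — rather, $|g(j)|$ and $|j|$ agree except for $j$ in a window of size $O(\ww{g})$ near $0$, plus the fact that for $|j|\gg n$ both exponentials are doubly-exponentially small), the sum $\sum_j|\delta_j|$ is dominated by $O(\ww{g})$ terms near the origin each of size $O(e^{-O(1)/n}) = O(1)$ times... and here again the naive bound stalls. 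The genuine point, which I expect to be the main obstacle, is to extract the cancellation: one must compare $e^{-|g(j)|/n}$ with $e^{-|j|/n}$ not crudely but via $|e^{-|g(j)|/n}-e^{-|j|/n}| \le \frac{\ww{g}}{n} e^{-(|j|-\ww{g})/n}$, so that $n$ times this is $\le \ww{g}\, e^{\ww{g}/n} e^{-|j|/n}$, and then $\sum_j \ww{g}\, e^{-|j|/n} \asymp \ww{g}\cdot n$; dividing by... no. I think the actual resolution is that the \emph{per-factor} relative error $\delta_j$ is $O\big(\frac{\ww{g}}{n} n e^{-|j|/n}\cdot e^{-n e^{-|j|/n}}\big)$ and the extra doubly-exponential damping $e^{-n e^{-|j|/n}}$ makes $\sum_j |\delta_j| = O(\ww{g}/n)\to 0$. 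Pinning down exactly which damping factor survives the ratio is the crux; everything else is the routine product/log expansion above.

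For the second assertion, I would note that $f_n\sbar_{x_0=0}$ is just $f_n$ restricted to the cylinder $\{x_0=0\}$, and since $x_0=0$ forces the $j=0$ term in the exponent to vanish, on that cylinder $f_n$ has the same form but with the $j=0$ factor absent; concretely $\|f_n\sbar_{x_0=0}\|^2 = \tfrac12\prod_{j\neq 0}\frac{1+\exp(-2n e^{-|j|/n})}{2}$, whereas $\|f_n\|^2 = \tfrac{1+\exp(-2n)}{2}\prod_{j\neq 0}\frac{1+\exp(-2n e^{-|j|/n})}{2}\cdot$ — wait, more precisely $\|f_n\|^2$ contains the full $j=0$ factor $\frac{1+e^{-2n}}{2}$, so the ratio $\|f_n\sbar_{x_0=0}\|^2/\|f_n\|^2 = \frac{1/2}{(1+e^{-2n})/2} = \frac{1}{1+e^{-2n}} \to 1$. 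So the second statement is immediate from the product formula once the first is set up, and needs no further work.
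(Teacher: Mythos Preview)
Your treatment of the second statement is correct and essentially identical to the paper's: the ratio $\|f_n\sbar_{x_0=0}\|^2/\|f_n\|^2$ equals $1/(1+e^{-2n})\to 1$ by isolating the $j=0$ factor in the product formula.

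The first statement, however, has a genuine gap. Your plan is to write $\langle g(f_n),f_n\rangle/\|f_n\|^2=\prod_j(1+\delta_j)$ and show $\sum_j|\delta_j|\to 0$, and you eventually settle on the estimate $\delta_j=O\big(\ww{g}\, e^{-|j|/n}\, e^{-n e^{-|j|/n}}\big)$. That termwise bound is essentially right, but the conclusion $\sum_j|\delta_j|=O(\ww{g}/n)$ is not: the sum $\sum_{j\in\ZZ} e^{-|j|/n}\, e^{-n e^{-|j|/n}}$ is \emph{bounded}, not $O(1/n)$. Indeed, comparing with the integral (substitute $s=e^{-t/n}$) gives $\int_0^\infty e^{-n e^{-t/n}} e^{-t/n}\,dt = n\int_0^1 e^{-ns}\,ds = 1-e^{-n}$, so the sum is of order~$1$. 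The doubly-exponential damping kills the divergent factor of $n$ that you noticed earlier, but it does \emph{not} produce an additional factor of $1/n$. Your aside that ``$|g(j)|$ and $|j|$ agree except for $j$ in a window of size $O(\ww{g})$ near~$0$'' is also incorrect: for the translation $g(j)=j+1$ we have $|g(j)|\neq|j|$ for every $j$.

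The missing idea is that one must exploit cancellation in the \emph{signed} sum. Expanding $\delta_j$ to first order gives the main term
\[
F_n(g)=\sum_{j\in\ZZ}\frac{a_{n,j}^2}{1+a_{n,j}^2}\,e^{-|j|/n}\,\big(|g(j)|-|j|\big),\qquad a_{n,j}=e^{-n e^{-|j|/n}},
\]
whose absolute convergence to~$0$ fails, as the paper explicitly warns. The paper proves $F_n(g)\to 0$ by Abel summation against the partial sums $B(u)=\sum_{|j|\le u}(|g(j)|-|j|)$, together with the combinatorial observation (Lemma~\ref{lem:B}) that $B(u)$ is \emph{uniformly bounded} in~$u$ (by a constant depending only on $\ww{g}$): since $g$ maps $[-u,u]$ into $[-u-\ww{g},\,u+\ww{g}]$ and onto a set containing $[-u+\ww{g},\,u-\ww{g}]$, the telescoping mass is confined to $O(\ww{g})$ boundary points. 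Once $F_n(g)\to 0$ is established, the higher-order remainder terms are genuinely $O(1/n)$ by the estimates you had in mind (the paper's Lemma~\ref{lem:sum}). This cancellation step is the crux you correctly sensed but did not identify.
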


\noindent
The notation $f_n\sbar_{x_0=0}$ represents the function $f_n$ multiplied by the characteristic function of the cylinder set describing the elementary event $x_0=0$.

\bigskip
In preparation for the proof, we write
$$a_{n,j} = \exp(-ne^{-|j|/n})\kern5mm \text{for} \kern3mm j\in\ZZ.$$
We shall often use implicitly the estimates
$$0 <  a_{n, j} \leq 1 \kern5mm\text{and}\kern5mm 0 <  \frac{a_{n, j}^2}{1+a_{n, j}^2} \leq a_{n, j}^2 \leq a_{n, j}.$$
Since $f_n$ is a product of the independent random variables $\exp\big(-n x_je^{-|j|/n}\big)$, we have
$$\|f_n\|^2 = \prod_{j\in\ZZ } \left( \frac12 + \frac12 a_{n,j}^2 \right).$$
A straightforward estimate shows that this product converges unconditionally (in the sense that the series of $\log\big(\frac12+ \frac12 a_{n,j}^2 \big)$ converges absolutely). We can regroup factors and compute the ratio
$$\frac{\|f_n\sbar_{x_0=0} \|^2}{\|f_n\|^2} =\frac1{1 + a_{n, 0}^2}$$
which thus converges to~$1$ as desired for the second statement of Theorem~\ref{thm:estimate}.

\medskip
The proof of the first statement will be divided into two propositions. Define the function $F_n \colon W(\ZZ)\to\RR$ by
$$F_n(g) = \sum_{j\in\ZZ} \frac{a_{n, j}^2}{1+a_{n, j}^2} \ e^{-|j|/n}\ \Big( |g(j)| - |j|\Big).$$
We begin with a conditional convergence:

\begin{proposition}\label{prop:mainestimate}
For any $g \in W(\ZZ)$ we have  $\langle  g (f_n),f_n\rangle / \|f_n\|^2\rightarrow 1$ as $n\rightarrow \infty$ provided $F_n(g)\to 0$.
\end{proposition}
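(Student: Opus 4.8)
The plan is to compute $\langle g(f_n),f_n\rangle$ exactly, pass to logarithms, and recognise $F_n(g)$ as the leading term of the resulting absolutely convergent series, with a remainder that tends to~$0$. The starting point is an exact product formula: since $f_n$ is a product of the independent coordinate functions $\exp(-n x_j e^{-|j|/n})$ and $g$ acts on $L^2(\{0,1\}^\ZZ)$ through the measure-preserving transformation induced by $g$, I would expand $g(f_n)(x)\,f_n(x)$ into a single product over $j\in\ZZ$ (re-indexing one of the two factors via $g$; a suitable re-indexing, or the identity $\langle g(f_n),f_n\rangle=\langle g^{-1}(f_n),f_n\rangle$ valid since $f_n$ is real and $g$ is unitary, makes $g$ rather than $g^{-1}$ appear below) and integrate it factor by factor against the symmetric Bernoulli measure. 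This gives
$$\langle g(f_n),f_n\rangle=\prod_{j\in\ZZ}\Big(\frac12+\frac12\,a_{n,j}\,a_{n,g(j)}\Big),$$
hence, dividing by the product already computed for $\|f_n\|^2$,
$$\frac{\langle g(f_n),f_n\rangle}{\|f_n\|^2}=\prod_{j\in\ZZ}\frac{1+a_{n,j}a_{n,g(j)}}{1+a_{n,j}^2}=\prod_{j\in\ZZ}\big(1+\varepsilon_{n,j}\big),\qquad\varepsilon_{n,j}:=\frac{a_{n,j}\big(a_{n,g(j)}-a_{n,j}\big)}{1+a_{n,j}^2}.$$

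\emph{Isolating $F_n(g)$.} Put $a_n(t)=\exp(-ne^{-t/n})$, so that $a_{n,j}=a_n(|j|)$, $a_n$ is increasing, and $a_n'(t)=a_n(t)e^{-t/n}$. From $a_n'(t)=e^{-t/n}\exp(-ne^{-t/n})\le 1/(en)$ (the maximum of $u\mapsto ue^{-nu}$) and $\big||g(j)|-|j|\big|\le\ww g$ one gets the uniform bound $|\varepsilon_{n,j}|\le\ww g/(en)$, so the product converges absolutely. Taylor's formula with Lagrange remainder gives
$$a_{n,g(j)}-a_{n,j}=a_{n,j}e^{-|j|/n}\big(|g(j)|-|j|\big)+\frac12\,a_n''(\xi_j)\big(|g(j)|-|j|\big)^2$$
for some $\xi_j$ between $|j|$ and $|g(j)|$; after dividing by $1+a_{n,j}^2$, the first summand is precisely the $j$-th term $t_{n,j}$ of $F_n(g)$. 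Using also $|\log(1+t)-t|\le t^2$ for $|t|\le\frac12$, one obtains for $n$ large
$$\log\frac{\langle g(f_n),f_n\rangle}{\|f_n\|^2}=F_n(g)+\sum_{j\in\ZZ}\big(\varepsilon_{n,j}-t_{n,j}\big)+\sum_{j\in\ZZ}\big(\log(1+\varepsilon_{n,j})-\varepsilon_{n,j}\big),$$
and it remains to show that both error sums tend to~$0$.

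\emph{Bounding the remainder.} Everything reduces to the two scalars $S_n:=\sum_{j\in\ZZ}a_{n,j}^2e^{-|j|/n}$ and $S_n':=\sum_{j\in\ZZ}a_{n,j}^2e^{-2|j|/n}$. Indeed $|a_n''(t)|\le a_n(t)e^{-2t/n}+\frac1n a_n(t)e^{-t/n}$, while $a_n(\xi_j)\le e^{\ww g}a_{n,j}$ and $e^{-\xi_j/n}\le e^{\ww g}e^{-|j|/n}$ whenever $\big|\xi_j-|j|\big|\le\ww g$; inserting these into the remainder term yields $\sum_j|\varepsilon_{n,j}-t_{n,j}|=O_{\ww g}(S_n'+S_n/n)$, and likewise $\sum_j|\varepsilon_{n,j}|=O_{\ww g}(S_n)$, so the second error sum is at most $\sum_j\varepsilon_{n,j}^2\le(\sup_j|\varepsilon_{n,j}|)\sum_j|\varepsilon_{n,j}|=O_{\ww g}(S_n/n)$, where $O_{\ww g}$ hides a constant depending on $\ww g$ only. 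It therefore suffices to prove $S_n=O(1)$ and $S_n'=O(1/n)$, which I would do by comparing these sums with $\int_0^\infty e^{-2ne^{-t/n}}e^{-kt/n}\,dt$; under the substitution $v=e^{-t/n}$ this integral equals $n\int_0^1 e^{-2nv}v^{k-1}\,dv$, which is bounded for $k=1$ and $O(1/n)$ for $k=2$. Both error sums are then $o(1)$, so $\log\big(\langle g(f_n),f_n\rangle/\|f_n\|^2\big)=F_n(g)+o(1)$ tends to $0$ whenever $F_n(g)\to 0$, which is the claim.

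\emph{The main difficulty.} The delicate point is that neither error term is pointwise small for a cheap reason: in the ``crossover window'' $|j|\approx n\log n$ the weight $\exp(-ne^{-|j|/n})$ has size of order $1$, and there the increment $a_{n,g(j)}-a_{n,j}$ is a priori only bounded by $\ww g$. The argument survives only because in that window the increment is in fact a derivative of size $a_n'(|j|)\asymp 1/n$; treating $a_{n,g(j)}-a_{n,j}$ as such and pulling out the factor $e^{-|j|/n}$ is exactly what converts the errors into the summable quantities $S_n$ and $S_n'$. The other thing requiring attention is keeping every constant in the Taylor remainder dependent on $\ww g$ only and not on $n$ — which is what the estimates $a_n(\xi_j)\le e^{\ww g}a_{n,j}$ and $e^{-\xi_j/n}\le e^{\ww g}e^{-|j|/n}$ guarantee.
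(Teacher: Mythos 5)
Your argument is correct and follows essentially the same route as the paper: the same product formula for $\langle g(f_n),f_n\rangle/\|f_n\|^2$, the same first-order term $\varepsilon_{n,j}=\frac{a_{n,j}^2}{1+a_{n,j}^2}\big(\frac{a_{n,g(j)}}{a_{n,j}}-1\big)$ whose linear part is exactly $F_n(g)$, the same $\log(1+z)=z+O(z^2)$ reduction, and the same bounds via $\sum_j a_{n,j}^2e^{-|j|/n}=O(1)$ and $\sum_j a_{n,j}^2e^{-2|j|/n}=O(1/n)$ obtained by sum--integral comparison (where, as in the paper's Lemma~\ref{lem:integral}, one should add the harmless maximum term since the comparison function is unimodal rather than monotone). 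Your only deviation is organizing the Taylor expansion as a Lagrange-remainder expansion of $t\mapsto\exp(-ne^{-t/n})$ instead of the paper's two-step expansion of the ratio $a_{n,g(j)}/a_{n,j}$ in Lemma~\ref{firstestimate}, which is a cosmetic difference.
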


The condition $F_n(g)\to 0$ is about a \emph{signed} series for which no absolute convergence to zero holds; it will be addressed by the following statement:

\begin{proposition}\label{prop:vanishes}
We have $\lim_{n\to\infty} F_n(g)=0$ for every $g \in W(\ZZ)$.
\end{proposition}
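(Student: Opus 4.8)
The plan is to exploit that $g$ is a \emph{bijection}: the naive bound $|F_n(g)|\le\ww{g}\sum_{j}\frac{a_{n,j}^2}{1+a_{n,j}^2}e^{-|j|/n}$ is useless, since the right-hand side has order $n$, so one must locate the cancellation. Write $w=\ww g$ and abbreviate $c_{n,j}=\frac{a_{n,j}^2}{1+a_{n,j}^2}e^{-|j|/n}$, so that $F_n(g)=\sum_j c_{n,j}\big(|g(j)|-|j|\big)$. First I would use the telescoping identity
$$|b|-|a|=\sum_{m\in\ZZ}\psi(m)\Big(\one[a\le m<b]-\one[b\le m<a]\Big),\qquad a,b\in\ZZ,$$
where $\psi(m)=1$ for $m\ge 0$ and $\psi(m)=-1$ for $m<0$, and then interchange the resulting double sum (legitimate: it converges absolutely because $\sum_j c_{n,j}<\infty$ and each $j$ contributes $|g(j)-j|\le w$ terms). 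This yields
$$F_n(g)=\sum_{m\in\ZZ}\psi(m)\,\Phi_n(m),\qquad \Phi_n(m)=\sum_{j:\ j\le m<g(j)}c_{n,j}\ -\sum_{j:\ g(j)\le m<j}c_{n,j},$$
and since $|g(j)-j|\le w$, every $j$ occurring in $\Phi_n(m)$ lies in $\{m-w+1,\dots,m+w\}$.

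Next I would record the \emph{unweighted flux} $\phi(m):=\#\{j:j\le m<g(j)\}-\#\{j:g(j)\le m<j\}=\sum_{j}\big(\one[j\le m]-\one[g(j)\le m]\big)$ and observe that it is independent of $m$: indeed $\phi(m+1)-\phi(m)=\sum_j\big(\one[j=m+1]-\one[g(j)=m+1]\big)=1-1=0$ because $g$ is a bijection. Call this constant $\phi_g$, so $|\phi_g|\le w$. Replacing each $c_{n,j}$ in $\Phi_n(m)$ by $c_{n,m}$ produces the main term $\phi_g\,c_{n,m}$ with an error at most $2w\,\delta_n(m)$, where $\delta_n(m):=\sum_{l=m-w}^{m+w-1}|c_{n,l+1}-c_{n,l}|$, since at most $2w$ indices $j$ contribute and each lies within distance $w$ of $m$. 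Summing over $m$ and using the symmetry $c_{n,-j}=c_{n,j}$, which gives $\sum_m\psi(m)c_{n,m}=c_{n,0}$, I obtain
$$|F_n(g)|\ \le\ |\phi_g|\,c_{n,0}+2w\sum_{m\in\ZZ}\delta_n(m)\ \le\ w\,c_{n,0}+4w^2\,\mathrm{TV}(c_{n,\cdot}),$$
where $\mathrm{TV}(c_{n,\cdot})=\sum_{l\in\ZZ}|c_{n,l+1}-c_{n,l}|$. As $c_{n,0}=a_{n,0}^2/(1+a_{n,0}^2)\to 0$, the whole matter reduces to showing $\mathrm{TV}(c_{n,\cdot})\to 0$.

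For the total variation I would prove that $m\mapsto c_{n,m}$ is unimodal on $\{0,1,2,\dots\}$. Writing $c_{n,m}=\gamma_n(|m|)$ with $\gamma_n(t)=\frac{e^{-2ne^{-t/n}}}{1+e^{-2ne^{-t/n}}}e^{-t/n}$, a short computation gives $\frac{d}{dt}\log\gamma_n(t)=\frac{2e^{-t/n}}{1+e^{-2ne^{-t/n}}}-\frac1n$, which is positive exactly when $2ne^{-t/n}>1+e^{-2ne^{-t/n}}$; since $s\mapsto s-1-e^{-s}$ is increasing with a single zero, $\gamma_n$ increases and then decreases. By symmetry $c_{n,\cdot}$ is then unimodal on $\ZZ_{\ge 0}$ and on $\ZZ_{\le 0}$, whence $\mathrm{TV}(c_{n,\cdot})\le 4\sup_m c_{n,m}$. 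Finally $c_{n,m}\le a_{n,m}^2e^{-|m|/n}=\exp\big(-2ne^{-|m|/n}-|m|/n\big)$, and maximizing $-2ne^{-u}-u$ over $u\ge 0$ (at $u=\log 2n$) gives $\sup_m c_{n,m}\le\frac1{2en}$; hence $\mathrm{TV}(c_{n,\cdot})\le\frac{2}{en}\to 0$ and therefore $F_n(g)\to 0$.

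The genuinely delicate point is this last paragraph: one must recognize $\mathrm{TV}(c_{n,\cdot})$ as the correct quantity to control — a cruder reindexing that bounds $|k|$ term by term only yields a divergent $O(\log n)$ estimate — and this rests on the non-obvious unimodality of $c_{n,m}$ in $m$, which is precisely where the design of $f_n$, namely the balance between the rapidly varying factor $e^{-2ne^{-|j|/n}}$ and the slowly decaying factor $e^{-|j|/n}$, enters in an essential way. The remaining ingredients — the telescoping identity, the constancy of the flux, and the symmetry $c_{n,-j}=c_{n,j}$ — are routine bookkeeping.
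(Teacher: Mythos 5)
Your proof is correct --- I checked the telescoping identity for $|b|-|a|$, the interchange of summation (legitimate, since each $j$ contributes at most $|g(j)-j|\le\ww{g}$ terms each bounded by $c_{n,j}$ and $\sum_j c_{n,j}<\infty$), the constancy of the flux, the error bound $2\ww{g}\,\delta_n(m)$, the identity $\sum_m\psi(m)c_{n,m}=c_{n,0}$, and the unimodality computation, and each step holds. It is, however, a genuine variant of the paper's argument rather than a reproduction of it. The paper also performs a summation by parts, but in the dual direction: it symmetrises to $j\ge 0$, applies the Abel formula~\eqref{ab} against the partial sums $B(u)$ of the displacements, and the bijectivity of $g$ enters through Lemma~\ref{lem:B}, which shows $|B(u)|\le 4\ww{g}^2$ by counting the elements of $g(J_u)\setminus J_{u-c}$; the variation of the weight is then controlled by differentiating $\psi$ explicitly and reusing the integral estimates from the proof of Lemma~\ref{lem:sum}. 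Your version isolates the cancellation more sharply --- the flux $\phi(m)$ is \emph{exactly} constant, not merely bounded, so you even extract a clean main term $\phi_g c_{n,0}$ --- and replaces the derivative computation by the unimodality of $m\mapsto c_{n,m}$, giving $\sum_l|c_{n,l+1}-c_{n,l}|\le 4\sup_m c_{n,m}\le 2/(en)$ with no integration. Both proofs rest on the same two pillars (cancellation coming from a bounded-displacement bijection, plus $O(1/n)$ total variation of the weight $\frac{a_{n,j}^2}{1+a_{n,j}^2}e^{-|j|/n}$), but the intermediate lemmas are different, and yours delivers the slightly more explicit bound $|F_n(g)|\le\ww{g}\,c_{n,0}+8\ww{g}^2/(en)$.
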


We now undertake the proof of Proposition~\ref{prop:mainestimate}. Using again the product form of $f_n$, one obtains
\begin{align*} 
\frac{\langle  g (f_n),f_n\rangle}{\|f_n\|^2}&=\prod\limits_{j\in\ZZ } \frac{1+a_{n,j}a_{n, g (j)}}{1+a_{n,j}^2}.
\end{align*}
Thus $\langle  g (f_n),f_n\rangle / \|f_n\|^2\rightarrow 1$ if and only if
\begin{equation}\label{eq:sum_log}
\lim_{n\to\infty} \sum_{j\in\ZZ } \log \frac{1+a_{n,j}a_{n, g(j)}}{1+a_{n,j}^2} =0.
\end{equation}
Next, we point out the elementary fact that there is an absolute constant $C>0$ (namely $C=4\log2 -2$) such that
\begin{equation}\label{eq:log}
z - C z^2 \leq \log (1+z) \leq z \kern5mm\forall\,z\geq -\frac12.
\end{equation}
We can apply this inequality to each summand of the series in~(\ref{eq:sum_log}) by writing
$$\frac{1+a_{n,j}a_{n, g (j)}}{1+a_{n,j}^2}= 1+ \frac{a_{n,j}^2}{1+a_{n,j}^2}\left( \frac{a_{n, g (j)}}{a_{n,j}}-1 \right)$$
because $0< a_{n,j}\leq 1$ for all $n$ and $j$ implies
$$ \frac{a_{n,j}^2}{1+a_{n,j}^2}\left( \frac{a_{n, g (j)}}{a_{n,j}}-1 \right)\geq -\frac{a_{n,j}^2}{1+a_{n,j}^2}\geq -\frac{1}{2}.$$
Therefore, summing up the inequalities given by~(\ref{eq:log}), we conclude that Proposition~\ref{prop:mainestimate} will follow once we prove the following two facts:
\begin{gather}\label{sum1}
\sum_{j\in\ZZ } \left(\frac{a_{n,j}^2}{1+a_{n,j}^2}\right)^2\left( \frac{a_{n, g (j)}}{a_{n,j}}-1 \right)^2 \rightarrow 0 \kern5mm\forall\,g\in W(\ZZ),
\end{gather}
 \begin{gather}\label{sum2}
 \sum_{j\in\ZZ } \frac{a_{n,j}^2}{1+a_{n,j}^2}\left( \frac{a_{n, g (j)}}{a_{n,j}}-1 \right)\rightarrow 0   \kern5mm\forall\,g\in W(\ZZ) \kern3mm \text{provided} \kern3mmF_n(g)\to 0.
\end{gather} 
Here is our first lemma.

\begin{lemma}\label{lem:sum}
For all $n$ we have
$$\sum_{j\in\ZZ} a_{n,j} e^{-|j|/n} \leq 3 \kern5mm\text{and}\kern5mm \sum_{j\in\ZZ} a_{n,j}^2 e^{-2|j|/n} \leq \frac1n.$$
\end{lemma}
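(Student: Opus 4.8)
Write $\varphi(x):=e^{-x/n}\,e^{-ne^{-x/n}}$ for $x\ge 0$, so that $a_{n,j}e^{-|j|/n}=\varphi(|j|)$ and $a_{n,j}^{2}e^{-2|j|/n}=\varphi(|j|)^{2}$. Both integrands depend only on $|j|$, hence each of the two sums has the shape $h(0)+2\sum_{k\ge 1}h(k)$ with $h\in\{\varphi,\varphi^{2}\}$, and the plan is to bound $\sum_{k\ge 1}h(k)$ by comparison with $\int_{0}^{\infty}h(x)\,dx$.

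First I would record the two integrals, which are explicit. Since $\varphi(x)=\frac{d}{dx}\,e^{-ne^{-x/n}}$, we get $\int_{0}^{\infty}\varphi=1-e^{-n}\le 1$. For $\varphi^{2}(x)=e^{-2x/n}e^{-2ne^{-x/n}}$ the substitution $u=ne^{-x/n}$ (which is the natural change of variables here, turning $a_{n,j}$ into $e^{-u}$) gives $\int_{0}^{\infty}\varphi^{2}=\tfrac1n\int_{0}^{n}u\,e^{-2u}\,du\le\tfrac1{4n}$, using $\int_{0}^{\infty}u\,e^{-2u}\,du=\tfrac14$. Next, writing $\varphi(x)=t\,e^{-nt}$ with $t=e^{-x/n}\in(0,1]$ and noting that $t\mapsto t\,e^{-nt}$ is increasing on $(0,1/n]$ and decreasing on $[1/n,\infty)$, one sees that $\varphi$ rises on $[0,n\log n]$ and falls afterwards, with $\max\varphi=e^{-1}/n$ and hence $\max\varphi^{2}=e^{-2}/n^{2}$; in particular, when $n=1$ the peak sits at the origin, so $\varphi$ and $\varphi^{2}$ are non-increasing on $[0,\infty)$.

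The comparison is the elementary observation that for a non-negative unimodal $h$ on $[0,\infty)$ — increasing up to some $x^{*}$, then decreasing — the unit intervals realizing $h(k)\le\int h$, namely $[k,k+1]$ when $k\le x^{*}$ and $[k-1,k]$ when $k\ge x^{*}$, are pairwise disjoint and contained in $[0,\infty)$, while at most two integers lie in $(x^{*}-1,x^{*}+1)$ and each of those contributes at most $\max h$. This gives $\sum_{k\ge 1}h(k)\le\int_{0}^{\infty}h+2\max h$, and $\sum_{k\ge 1}h(k)\le\int_{0}^{\infty}h$ when $h$ is non-increasing. Feeding in the data above: for $n\ge 2$ this yields
$$\sum_{j\in\ZZ} a_{n,j}e^{-|j|/n}\;\le\; e^{-n}+2(1-e^{-n})+\tfrac{4e^{-1}}{n}\;\le\; 2+2e^{-1}<3$$
and
$$\sum_{j\in\ZZ} a_{n,j}^{2}e^{-2|j|/n}\;\le\; e^{-2n}+\tfrac1{2n}+\tfrac{4e^{-2}}{n^{2}}\;\le\;\tfrac1n ,$$
the last inequality being the routine check $e^{-2n}+4e^{-2}/n^{2}\le 1/(2n)$ for $n\ge 2$; while for $n=1$, where $\varphi$ and $\varphi^{2}$ are non-increasing, the correction term is absent and the same estimates give $2-e^{-1}<3$ and $e^{-2}+\tfrac12<1$.

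The only genuine difficulty is that $\varphi$ is not monotone — it has an interior maximum near $x=n\log n$ — so a naive term-by-term integral comparison is unavailable; the work lies in controlling the couple of integer terms nearest that peak sharply enough to keep the absolute constant equal to $3$ (and the second bound equal to $1/n$), which in turn forces the degenerate case $n=1$ to be treated separately.
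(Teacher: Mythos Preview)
Your proof is correct and follows essentially the same integral-comparison-for-a-unimodal-function approach as the paper. The only difference is cosmetic: the paper's comparison lemma reads $\sum_{j\ge 0}h(j)\le h(t_0)+\int_0^\infty h$ with a single correction term at the peak, which is just sharp enough to handle all $n$ uniformly, whereas your slightly looser bound $\sum_{k\ge1}h(k)\le\int_0^\infty h+2\max h$ forces the separate treatment of $n=1$.
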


It is based on the following elementary comparison argument.

\begin{lemma}\label{lem:integral}
Let $t_0\geq 0$ and let $\fhi\colon \RR_{\geq 0} \to \RR_{\geq 0}$ be a function which is increasing on $[0, t_0]$ and decreasing on $[t_0, \infty)$. Then
$$\sum_{j\geq 0} \fhi(j) \leq  \fhi(t_0) + \int_0^\infty \fhi(t)\,dt. \eqno{\square}$$
\end{lemma}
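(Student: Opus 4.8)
The plan is the standard comparison of a monotone function with its integral, splitting the sum at the integer nearest below $t_0$. First I would put $n_0 := \lfloor t_0\rfloor \in \NN$, so that $n_0 \le t_0 < n_0+1$, and decompose
$$\sum_{j\ge 0}\fhi(j) \;=\; \sum_{j=0}^{n_0-1}\fhi(j)\;+\;\fhi(n_0)\;+\;\sum_{j\ge n_0+1}\fhi(j),$$
where the first sum is empty when $n_0=0$. The term $\fhi(n_0)$ is deliberately isolated because $\fhi$ need not be monotone on the one unit interval $[n_0,n_0+1]$ straddling $t_0$; on all the other unit intervals it is.

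Next I would estimate the two tails. For $0\le j\le n_0-1$ the interval $[j,j+1]$ is contained in $[0,t_0]$, so $\fhi$ is increasing there and hence $\fhi(j)\le\int_j^{j+1}\fhi(t)\,dt$; summing over such $j$ gives $\sum_{j=0}^{n_0-1}\fhi(j)\le\int_0^{n_0}\fhi(t)\,dt$. Symmetrically, for $j\ge n_0+1$ the interval $[j-1,j]$ is contained in $[t_0,\infty)$, so $\fhi$ is decreasing there and $\fhi(j)\le\int_{j-1}^{j}\fhi(t)\,dt$; summing gives $\sum_{j\ge n_0+1}\fhi(j)\le\int_{n_0}^\infty\fhi(t)\,dt$. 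Finally, since $\fhi$ is increasing on $[0,t_0]$ and $n_0\le t_0$, the middle term obeys $\fhi(n_0)\le\fhi(t_0)$.

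Adding the three inequalities and combining $\int_0^{n_0}\fhi+\int_{n_0}^\infty\fhi=\int_0^\infty\fhi$ yields exactly $\sum_{j\ge 0}\fhi(j)\le\fhi(t_0)+\int_0^\infty\fhi(t)\,dt$. There is no real obstacle here: the only thing to be mildly careful about is the non-monotone interval around $t_0$, handled by pulling out $\fhi(n_0)$; note also that all integrals are well defined in $[0,\infty]$ since $\fhi\ge 0$, and if $\int_0^\infty\fhi=\infty$ the asserted bound is vacuous.
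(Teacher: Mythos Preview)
Your overall strategy is the right one, and the paper in fact omits the proof altogether (the lemma is stated with a closing~$\square$), so there is no argument in the paper to compare against. There is, however, a small but genuine gap at the term $j=n_0+1$.

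You assert that for $j\ge n_0+1$ the interval $[j-1,j]$ is contained in $[t_0,\infty)$. For $j=n_0+1$ this interval is $[n_0,n_0+1]$, and since $n_0=\lfloor t_0\rfloor\le t_0$ it is \emph{not} contained in $[t_0,\infty)$ unless $t_0$ happens to be an integer. On that one unit interval $\fhi$ is unimodal rather than monotone, and the inequality $\fhi(n_0+1)\le\int_{n_0}^{n_0+1}\fhi$ you use can genuinely fail: take $t_0=\tfrac12$, $n_0=0$, and let $\fhi$ rise linearly from $\fhi(0)=0$ to $\fhi(\tfrac12)=1$, then fall linearly to $\fhi(1)=0.9$; then $\int_0^1\fhi=0.725<0.9=\fhi(1)$.

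The fix is to isolate \emph{both} $\fhi(n_0)$ and $\fhi(n_0+1)$. Your bounds for $j\le n_0-1$ and for $j\ge n_0+2$ go through unchanged (now the relevant unit intervals really do lie in the monotone regions), yielding $\int_0^{n_0}\fhi$ and $\int_{n_0+1}^\infty\fhi$ respectively. For the two middle terms, both $\fhi(n_0)$ and $\fhi(n_0+1)$ are $\le\fhi(t_0)$, while $\fhi(t)\ge\min\big(\fhi(n_0),\fhi(n_0+1)\big)$ for every $t\in[n_0,n_0+1]$ (check the two halves $[n_0,t_0]$ and $[t_0,n_0+1]$ separately). Hence
\[
\fhi(n_0)+\fhi(n_0+1)\;\le\;\fhi(t_0)+\int_{n_0}^{n_0+1}\fhi(t)\,dt,
\]
and adding everything up gives the lemma.
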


\begin{proof}[Proof of Lemma~\ref{lem:sum}]
For the first series, we consider the function $\fhi$ defined by $\fhi(t) = \exp(-n e^{-t/n}) e^{-t/n}$. One verifies that it satisfies the condition of Lemma~\ref{lem:integral} for $t_0=n\log n$. Therefore we can estimate
$$\sum_{j\in\ZZ} a_{n,j} e^{-|j|/n}  < 2 \sum_{j\geq 0} \fhi(j) \leq 2  e^{-1}/n + 2\int_0^\infty \exp(-n e^{-t/n})  e^{-t/n}\,dt.$$
The change of variable $s=e^{-t/n}$ shows that the integral is $\int_0^1 n e^{-ns} \,ds = 1-e^{-n}$ and thus in particular the series is bounded by $2(e^{-1}+1) <3$. For the second series, consider $\fhi(t) =  \exp(-2n e^{-t/n}) e^{-2t/n}$, again with $t_0=n\log n$. Lemma~\ref{lem:integral} yields
$$\sum_{j\in\ZZ} a_{n,j}^2 e^{-2|j|/n}  < 2 \sum_{j\geq 0} \fhi(j) \leq 2 (n e)^{-2}+ 2 \int_0^\infty \exp(-2n e^{-t/n})  e^{-2t/n}\,dt.$$
The change of variable $s=e^{-t/n}$ shows that the integral is
$$\int_0^1 n e^{-2ns} s\,ds = \frac{1-(1+2n)e^{-2n}}{4n} < \frac1{4n}$$
and thus in particular the series is bounded by $2 (n e)^{-2} + 1/(2n)< 1/n$.
\end{proof}

\begin{lemma}\label{firstestimate}
For any $g\in W(\ZZ)$ there are constants $C_g $, $C_g '$ and $C_g  ''$ which depend only on $\ww{g}$ such that for all $n$ and $j$ we have:
\begin{equation}
\frac{a_{n, g (j)}}{a_{n,j}} = \exp\Big(e^{-\frac{|j|}{n}}(| g (j)|-|j|+\eta( g ,j,n))\Big), \kern2mm\text{where}\kern2mm |\eta( g ,j,n)|\leq C_g /n.\label{C1}
\end{equation}
\begin{multline}
\frac{a_{n, g (j)}}{a_{n,j}}-1 = e^{-\frac{|j|}{n}}(| g (j)|-|j|)+ \eta({ g },n,j)e^{-\frac{|j|}{n}}+\teta({ g },n,j),\\
\text{where\kern2mm} |\teta({ g },n,j)| \leq C_{ g }' e^{-2\frac{|j|}{n}}.\label{C2}
\end{multline}
\begin{equation}
\left|\frac{a_{n, g (j)}}{a_{n,j}}-1 \right| \leq C_{ g }'' e^{-\frac{|j|}{n}}.\label{C3}
\end{equation}
\end{lemma}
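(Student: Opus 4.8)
The plan is to expand everything around the single quantity $e^{-|j|/n}$, which is the only scale that matters here. Set $w:=\ww{g}$ and $\delta_j:=|g(j)|-|j|$; since $g\in W(\ZZ)$ we have $|\delta_j|\leq|g(j)-j|\leq w$ for every $j$. Starting from $a_{n,j}=\exp(-ne^{-|j|/n})$ and using $e^{-|g(j)|/n}=e^{-|j|/n}e^{-\delta_j/n}$, I would first record the basic identity
$$\frac{a_{n,g(j)}}{a_{n,j}}=\exp\Big(n\big(e^{-|j|/n}-e^{-|g(j)|/n}\big)\Big)=\exp\Big(ne^{-|j|/n}\big(1-e^{-\delta_j/n}\big)\Big).$$
The point of this rewriting is that the potentially dangerous factor $n$ out front now multiplies a quantity of size $\delta_j/n$, so the two scales are poised to cancel.

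For~(\ref{C1}) I would apply Taylor's formula with Lagrange remainder to $x\mapsto 1-e^{-x}$ at $x=\delta_j/n$. Since $|\delta_j/n|\leq w$ this yields $|n(1-e^{-\delta_j/n})-\delta_j|\leq \tfrac12 w^2e^{w}/n$; defining $\eta(g,j,n):=n(1-e^{-\delta_j/n})-\delta_j$ and substituting into the basic identity gives~(\ref{C1}) with $C_g=\tfrac12 w^2e^{w}$, a constant depending only on $\ww{g}$. The essential feature is exactly this $1/n$ produced by the quadratic Taylor term: it is what tames the leading $n$. Expanding before pulling out $e^{-|j|/n}$ would not accomplish this, and this is the one place where one must be slightly careful.

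For~(\ref{C2}) and~(\ref{C3}) I would feed~(\ref{C1}) into the elementary bounds $|e^v-1|\leq|v|e^{|v|}$ and $|e^v-1-v|\leq\tfrac12 v^2e^{|v|}$. Indeed~(\ref{C1}) reads $a_{n,g(j)}/a_{n,j}=e^{v}$ with $v=e^{-|j|/n}\big(\delta_j+\eta(g,j,n)\big)$, so $|v|\leq e^{-|j|/n}(w+C_g)\leq w+C_g$ for $n\geq 1$. The first bound then gives~(\ref{C3}) with $C_g''=(w+C_g)e^{w+C_g}$. For~(\ref{C2}) I would write $e^v-1=v+(e^v-1-v)$ and split $v=e^{-|j|/n}(|g(j)|-|j|)+\eta(g,j,n)e^{-|j|/n}$; the first two summands are precisely the first two terms on the right-hand side of~(\ref{C2}), and setting $\teta(g,n,j):=e^v-1-v$ one gets $|\teta(g,n,j)|\leq\tfrac12(w+C_g)^2e^{w+C_g}\,e^{-2|j|/n}$, i.e.\ $C_g'=\tfrac12(w+C_g)^2e^{w+C_g}$. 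All three constants are increasing functions of $w$ alone, hence depend only on $\ww{g}$.

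I do not expect a genuine obstacle: all three statements amount to careful bookkeeping with the exponential series. The only subtlety — and the reason these estimates are packaged into one lemma rather than proved on the fly — is uniformity: every error term must carry the correct power of $e^{-|j|/n}$ with a constant that is independent of $j$ and $n$. This is exactly what is secured by performing all expansions after isolating the factor $e^{-|j|/n}$, so that every remainder is controlled by a power of $\delta_j/n$ with $|\delta_j|\leq\ww{g}$.
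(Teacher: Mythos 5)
Your proof is correct and follows essentially the same route as the paper: the same rewriting $a_{n,g(j)}/a_{n,j}=\exp\big(ne^{-|j|/n}(1-e^{-\delta_j/n})\big)$, a Taylor/series bound of order $1/n$ for $\eta$, and an expansion of the exponential to isolate $\teta$ with an $e^{-2|j|/n}$ factor. The only cosmetic differences are that you use Lagrange-remainder bounds where the paper bounds the explicit tail series, and you prove~(\ref{C3}) directly from~(\ref{C1}) rather than deducing it from~(\ref{C1}) and~(\ref{C2}).
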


\begin{proof}
Note that the conclusion~(\ref{C3}) is an easy consequence of~(\ref{C1}) and~(\ref{C2}). From the definition of $a_{n,j}$ we have 
$$\frac{a_{n, g (j)}}{a_{n,j}}=\exp\left(e^{-\frac{|j|}{n}}n\left(1-e^{\frac{|j|-| g (j)|}{n}}\right)\right).$$
Then using the Taylor expansion we have
$$n\left(1-e^{\frac{|j|-| g (j)|}{n}}\right)=| g (j)|-|j|  +\eta( g ,j,n),$$
wherein
$$\eta( g ,j,n):=-\sum_{k\geq 2} \frac{(|j|-| g (j)|)^k}{k!n^{k-1}}.$$
Now
$$|\eta( g ,j,n)|\leq \frac{1}{n} \sum_{k\geq 2} \frac{\ww{g}^k}{k!}\leq \frac{e^{\ww{g}}}{n}$$
which proves~(\ref{C1}). Continuing to expand~(\ref{C1}), we have
\begin{align*}
\frac{a_{n, g (j)}}{a_{n,j}}-1&=\exp\Big(e^{-\frac{|j|}{n}}(| g (j)|-|j|+\eta( g ,j,n))\Big)-1=\\
&= e^{-\frac{|j|}{n}}(| g (j)|-|j|)+ e^{-\frac{|j|}{n}}\eta( g ,j,n)+\teta( g ,j,n)
\end{align*}
wherein
$$\teta( g ,j,n):=\sum_{k\geq 2}\frac{1}{k!}e^{-\frac{k|j|}{n}}\big(| g (j)|-|j|+\eta( g ,j,n)\big)^k.$$
Thus we have
\begin{align*}
|\teta( g ,j,n)|&\leq  e^{-\frac{2|j|}{n}}  \sum_{k\geq 2}\frac{1}{k!}\Big| | g (j)|-|j|+\eta( g ,j,n)\Big|^k \leq e^{-\frac{2|j|}{n}} \exp\Big({\ww{g}+\frac{C_{ g }}{n}}\Big) \leq e^{-\frac{2|j|}{n}}C_{ g }',
\end{align*}
as required for~(\ref{C2}).
\end{proof}

\begin{proof}[End of the proof of Proposition~\ref{prop:mainestimate}]
Recall that we have reduced the proof to showing~(\ref{sum1}) and~(\ref{sum2}). By Lemma~\ref{firstestimate}(\ref{C3}) and Lemma~\ref{lem:sum} we have
$$\sum_{j\in\ZZ } \left(\frac{a_{n,j}^2}{1+a_{n,j}^2}\right)^2\left( \frac{a_{n, g (j)}}{a_{n,j}}-1 \right)^2 \leq C_{ g }''^2 \sum_{j\in\ZZ } a_{n,j}^4 e^{-2\frac{|j|}{n}}\leq  C_{ g }''^2 \sum_{j\in\ZZ } a_{n,j}^2 e^{-2\frac{|j|}{n}}\leq C_{ g }''^2 /n,$$
which implies the convergence~(\ref{sum1}). For~(\ref{sum2}), keep the notations of Lemma~\ref{firstestimate}. By point~(\ref{C2}) of that lemma, we have
\begin{align*} \sum_{j\in\ZZ } \frac{a_{n,j}^2}{1+a_{n,j}^2}\left( \frac{a_{n, g (j)}}{a_{n,j}}-1 \right)&= \sum_{j\in\ZZ }\frac{a_{n,j}^2}{1+a_{n,j}^2}e^{-\frac{|j|}{n}} \Big(| g (j)|-|j|\Big) \\
&+ \sum_{j\in\ZZ }\frac{a_{n,j}^2}{1+a_{n,j}^2}e^{-\frac{|j|}{n}} \eta( g ,j,n) \\
&+ \sum_{j\in\ZZ }\frac{a_{n,j}^2}{1+a_{n,j}^2}\teta( g ,j,n)
\end{align*}
and we recall that the first of the three terms is $F_n(g)$, which is assumed to go to zero. For the second term, since $|\eta( g ,j,n)|\leq C_g /n$, Lemma~\ref{lem:sum} gives
$$\left| \sum_{j\in\ZZ }\frac{a_{n,j}^2}{1+a_{n,j}^2}e^{-\frac{|j|}{n}} \eta( g ,j,n)\right| \leq  \frac{C_g }{n} \sum_{j\in\ZZ } a_{n,j} e^{-\frac{|j|}{n}} \leq \frac{3 C_g}{n}.$$
For the last term, since $|\teta( g ,j,n)|\leq C_{ g }' e^{-2\frac{|j|}{n}}$, Lemma~\ref{lem:sum} implies
$$\left| \sum_{j\in\ZZ }\frac{a_{n,j}^2}{1+a_{n,j}^2}\teta( g ,j,n) \right| \leq C_{ g }' \sum_{j\in\ZZ} a_{n,j}^2 e^{-2\frac{|j|}{n}} \leq  \frac{C_{ g }'}{n}.$$
This completes the proof of~(\ref{sum2}) and therefore of the proposition.
\end{proof}

In order to apply Proposition~\ref{prop:mainestimate}, we need to control $F_n$ as stated in Proposition~\ref{prop:vanishes}.  Let thus $g\in W(\ZZ)$ be given; writing
$$b_0 =|g(0)|, \kern5mm \text{and} \kern3mm b_j=|g(j)|+|g(-j)|-(|j|+|-j|) \kern5mm\text{for}\kern3mm j>0,$$
we have
$$F_n(g)=\sum_{j=0}^{\infty}\frac{a_{n, j}^2}{1+a_{n, j}^2}e^{-j/n}b_j$$
since $a_{n, j}=a_{n, -j}$. Define functions $B$ and $\psi$ on $\RR_{\geq 0}$ by
$$B(t)=\sum_{0\leq j\leq t}b_j, \kern5mm \psi(t)=\frac{\exp(-2ne^{-t/n})}{1+\exp(-2ne^{-t/n})}e^{-t/n}.$$
Then the Abel summation formula gives
\begin{equation}\label{ab}
\sum_{j=0}^N \psi(j)b_j=\psi(N)B(N)-\int_0^N B(t)\, d\psi(t). \kern10mm(\forall\,N\in\NN)
\end{equation}
\begin{lemma}\label{lem:B}
We have $-2\ww{g}^2\leq B(u)\leq 4\ww{g}^2$ for all $u>\ww{g}$.
\end{lemma}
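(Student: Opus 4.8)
The plan is to first rewrite $B(u)$ in a symmetric closed form, and then to read off both inequalities from the bijectivity of $g$ together with the displacement bound $\ww{g}$. Set $w=\ww{g}$ and $N=\lfloor u\rfloor$, and let $I=\{-N,\dots,N\}$. Expanding $b_0=|g(0)|$ and $b_j=|g(j)|+|g(-j)|-2j$ for $j>0$, and using $\sum_{1\le j\le N}2j=N(N+1)=\sum_{j\in I}|j|$, one obtains
$$B(u)=\sum_{j\in I}\bigl(|g(j)|-|j|\bigr).$$
Since $g$ is a bijection of $\ZZ$, re-indexing the sum of the $|g(j)|$ by $k=g(j)$ gives $B(u)=\sum_{k\in g(I)}|k|-\sum_{j\in I}|j|$; cancelling the common contribution of $g(I)\cap I$ then leaves
$$B(u)=\sum_{k\in A}|k|-\sum_{j\in D}|j|,\qquad A:=g(I)\setminus I,\quad D:=I\setminus g(I),$$
where $|A|=|D|$ because $g(I)$ and $I$ have equal cardinality.

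The next step is to locate $A$ and $D$. If $g(j)\in A$ then $j\in I$ and $g(j)\notin I$, so $N+1\le|g(j)|\le|j|+w\le N+w$; thus $A$ lies in a set of size $2w$ and each of its elements has absolute value in $\{N+1,\dots,N+w\}$. Dually, $j\in D$ means $j\in I$ but $g^{-1}(j)\notin I$, whence $|g^{-1}(j)|\ge N+1$ and $|j|\ge|g^{-1}(j)|-w\ge N+1-w$. This last inequality is exactly where the hypothesis $u>\ww{g}$ enters: since $w\in\NN$ it forces $N\ge w$, so $N+1-w\ge 1$, and every element of $D$ has absolute value in $\{N+1-w,\dots,N\}$. (If $w=0$ then $g=\mathrm{id}$ and $B\equiv 0$, so we may assume $w\ge 1$.)

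Given these ranges, the two bounds follow by crude counting, with the $N$-dependence cancelling because $|A|=|D|$. For the upper bound,
$$B(u)\le|A|(N+w)-|D|(N+1-w)=|A|(2w-1)\le 2w(2w-1)\le 4w^2;$$
for the lower bound, using only $D\subseteq I$,
$$B(u)\ge|A|(N+1)-|D|\,N=|A|\ge 0\ge -2\ww{g}^2.$$
The one genuinely delicate point is the second step: one must exploit that $g$ is bijective on all of $\ZZ$, not merely injective on $I$, since it is this that forces the ``missing'' indices $D$ out to absolute value at least $N+1-w$; combined with $N\ge\ww{g}$, this is precisely what makes the negative term in $B(u)$ offset the positive one and remove the dependence on $N$.
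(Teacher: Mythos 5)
Your proof is correct and is essentially the paper's argument: both start from the identity $B(u)=\sum_{k\in g(J_u)}|k|-\sum_{j\in J_u}|j|$ (with $J_u=\{j:|j|\le u\}$, your $I$) and cancel a common subset so that only $O(\ww{g})$ boundary terms, each lying within $\ww{g}$ of $\pm\lfloor u\rfloor$ in absolute value, remain to be bounded via the displacement bound. The only (harmless) difference is that you cancel the intersection $g(I)\cap I$ rather than $J_{u-\ww{g}}$, which incidentally gives the sharper lower bound $B(u)\ge 0$.
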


\begin{proof}
For simplicity, write $c:=\ww{g}$ and $J_u:=\{j:|j|\leq u\}$. Thus $B(u)=\sum_{j\in g(J_u)}|j|-\sum_{j\in J_u}|j|$. Since $J_{u-c}\se g(J_u)$, we have
\begin{equation}\label{eq:B}
B(u)=\sum_{j\in g(J_u)}|j|-\sum_{j\in J_u}|j|=\sum_{j\in g(J_u)\setminus J_{u-c}}|j|-\sum_{j\in J_u\setminus J_{u-c}}|j|.
\end{equation}
Now note first that since $J_{u-c}\se g(J_u)$, the number of elements
in the set $g(J_u)\setminus J_{u-c}$ is equal to the number of
elements in $J_u\setminus J_{u-c}$, which is $2c$. Also, for any $j\in
g(J_u)\setminus J_{u-c}$ we have $u-c < |j|\leq u+c$, and for any $j\in J_u\setminus J_{u-c}$, $u-c<|j|\leq u$. Hence~\eqref{eq:B} implies
$$-2c^2=2c(u-c)-2cu\leq B(u)\leq 2c(u+c)-2c(u-c)=4c^2.$$
\end{proof}

\begin{proof}[End of the proof of Proposition~\ref{prop:vanishes}.]
Since $B(N)$ is bounded by Lemma~\ref{lem:B} and since $\lim_{N\to\infty}\psi(N)$ vanishes, the equality~\eqref{ab} gives $F_n(g) = -\int_0^\infty B(t)\,d\psi(t)$. After computing explicitly the derivative $\psi'$, this rewrites as
$$F_n(g)= \frac{1}{n}\int_0^{\infty}B(t)\psi(t)dt -\int_0^{\infty}B(t)\frac{2\exp(-2ne^{-t/n}) e^{-2t/n}}{(1+\exp(-2ne^{-t/n}))^2} dt.$$
Using Lemma~\ref{lem:B} and $0<\psi(t)\leq \exp(-ne^{-t/n})e^{-t/n}$, the first integral is bounded by
$$\left|\frac{1}{n}\int_0^{\infty}B(t)\psi(t)dt\right|\leq \frac{1}{n}4\ww{g}^2 \int_0^{\infty}\exp(-ne^{-t/n})e^{-t/n}dt = \frac{1}{n}4\ww{g}^2 (1-e^{-n}),$$
which goes to zero. Similarly, the second integral is bounded by
$$\left|\int_0^{\infty}B(t)\frac{2\exp(-2ne^{-t/n})}{(1+\exp(-2ne^{-t/n}))^2}e^{-2t/n}dt\right|\leq 8\ww{g}^2\int_0^{\infty}\exp(-2ne^{-t/n})e^{-2t/n}dt < \frac{2\ww{g}^2}{n},$$
the last inequality having already been observed in the proof of Lemma~\ref{lem:sum}.
\end{proof}

Taken together, Proposition~\ref{prop:vanishes} and Proposition~\ref{prop:mainestimate} finish the proof of Theorem~\ref{thm:estimate} since we already observed  $\|f_n\sbar_{x_0=0} \|/\|f_n\| \to 1$.

\section{Actions on sets of finite subsets}\label{sec:actions}
Let $G$ be a group acting on a set $X$. The collection $\pf(X)$ of finite subsets of $X$ is an abelian $G$-group for the operation~$\triangle$ of symmetric difference. The resulting semi-direct product $\pf(X)\rtimes G$, which can be thought of as the ``lamplighter'' restricted wreath product associated to the $G$-action on $X$, has itself a natural ``affine'' action on $\pf(X)$, where the latter set can be considered as the coset space $(\pf(X)\rtimes G) /G$.

\medskip
It will be convenient to identify the Pontryagin dual of the (discrete) group $\pf(X)$ with the generalised Bernoulli $G$-shift $\{0,1\}^X$, the duality pairing being given for $E\in\pf(X)$ and $\omega=\{\omega_x\}_{x\in X}\in \{0,1\}^X$ by the character $\exp(i \pi \sum_{x\in E} \omega_x)\in\{\pm 1\}\se \mathbf{C}^*$. The normalised Haar measure corresponds to the symmetric Bernoulli measure on $\{0,1\}^X$.

\begin{lemma}\label{lem:Fourier}
Assume that $G$ acts transitively on $X$ and choose $x_0\in X$. The following assertions are equivalent.
\begin{enumerate}[(i)]
\item There is a net $\{f_n\}$ of $G$-almost invariant vectors in $L^2(\{0,1\}^X)$ such that the ratio $\|f_n\sbar_{\omega_{x_0}=0}\| / \|f_n\|$ converges to~$1$.\label{Fourier:ai}
\item The $\pf(X)\rtimes G$-action on $\pf(X)$ admits an invariant mean.\label{Fourier:mean}
\item The $G$-action on $\pf(X)$ admits an invariant mean giving weight~$1/2$ to the collection of sets containing~$x_0$.\label{Fourier:05}
\item The $G$-action on $\pf(X)$ admits an invariant mean giving full weight to the collection of sets containing~$x_0$.\label{Fourier:1}
\end{enumerate}
\end{lemma}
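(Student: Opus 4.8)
The plan is to prove the cycle of implications $(\ref{Fourier:ai}) \Rightarrow (\ref{Fourier:mean}) \Rightarrow (\ref{Fourier:05}) \Rightarrow (\ref{Fourier:ai})$ together with the easy equivalence $(\ref{Fourier:1}) \Leftrightarrow (\ref{Fourier:05})$, or perhaps more efficiently a slightly different tour such as $(\ref{Fourier:1}) \Rightarrow (\ref{Fourier:05}) \Rightarrow (\ref{Fourier:ai}) \Rightarrow (\ref{Fourier:mean}) \Rightarrow (\ref{Fourier:1})$. The conceptual heart is the Pontryagin-duality dictionary already set up in the excerpt: the discrete abelian group $\pf(X)$ has compact dual $\{0,1\}^X$ with Haar measure equal to the symmetric Bernoulli measure, and under this duality an $L^2$-function on $\{0,1\}^X$ corresponds (via Fourier coefficients) to an $\ell^2$-function on $\pf(X)$, while multiplication by the characteristic function of the cylinder $\{\omega_{x_0}=0\}$ on the dual side corresponds on the group side to the averaging projection associated with the subgroup generated by $\{x_0\}$, i.e.\ to the operation that symmetrises $E \mapsto E \triangle \{x_0\}$. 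Throughout, ``invariant mean'' should be read as a finitely additive invariant probability measure on the relevant set, equivalently a $G$-invariant (or $\pf(X)\rtimes G$-invariant) state on $\ell^\infty$ of that set.

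For $(\ref{Fourier:ai}) \Rightarrow (\ref{Fourier:mean})$: the $\pf(X)\rtimes G$-action on $\pf(X)$ is transitive with point stabiliser $G$, so it admits an invariant mean if and only if the associated Koopman-type representation (equivalently the quasi-regular representation on $\ell^2(\pf(X))$) almost has invariant vectors — this is the standard characterisation of amenability of a transitive action, going back to the work around the Følner/Hulanicki/Reiter circle of ideas. Transporting the almost-invariant net $\{f_n\}$ through the Fourier transform gives a net in $\ell^2(\pf(X))$; the $G$-almost-invariance of $f_n$ handles the $G$-part of $\pf(X)\rtimes G$, and the hypothesis $\|f_n\sbar_{\omega_{x_0}=0}\|/\|f_n\| \to 1$ is exactly what is needed to make these vectors also almost-invariant under the generator $x_0$ of $\pf(X)$ acting by translation — and hence, by transitivity of $G$, under all of $\pf(X)$. (One must check the almost-invariance is uniform on the generating set $\{x_0\} \cup G$ well enough to conclude; a standard $\epsilon/2$ argument combining the two estimates does this.) So the quasi-regular representation almost has invariant vectors and $(\ref{Fourier:mean})$ follows.

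For $(\ref{Fourier:mean}) \Rightarrow (\ref{Fourier:05})$: restrict an invariant mean for $\pf(X)\rtimes G$ to the subgroup $G$; it is still $G$-invariant. To see it gives weight $1/2$ to the family of sets containing $x_0$, note that the element $x_0 \in \pf(X) \subseteq \pf(X)\rtimes G$ acts on $\pf(X)$ by $E \mapsto E \triangle \{x_0\}$, which is an involution swapping $\{E : x_0 \in E\}$ with its complement; invariance of the mean under this involution forces both sets to have measure $1/2$. Then $(\ref{Fourier:05}) \Rightarrow (\ref{Fourier:ai})$ runs the Fourier correspondence in reverse: an invariant mean on $\pf(X)$ gives, by amenability of the transitive $G$-action on $\pf(X)$, an almost-invariant net in $\ell^2(\pf(X))$; one then needs to upgrade it so that the ratio of norms on $\{\omega_{x_0}=0\}$ tends to $1$, which one arranges by replacing $f_n$ with its image under the symmetrising projection $P = \tfrac12(I + (\text{action of }x_0))$ — the weight-$1/2$ property of the mean is what guarantees that $\|Pf_n\|/\|f_n\|$ does not collapse, and applying $P$ only improves $G$-almost-invariance by a controlled amount while forcing the Fourier support into the ``$\omega_{x_0}=0$'' half. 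Finally $(\ref{Fourier:1}) \Leftrightarrow (\ref{Fourier:05})$: obviously $(\ref{Fourier:1}) \Rightarrow (\ref{Fourier:05})$ fails as stated (full weight is not weight $1/2$), so the genuine content is $(\ref{Fourier:05}) \Rightarrow (\ref{Fourier:1})$, obtained by pushing a weight-$1/2$ invariant mean $\mu$ forward under the $G$-equivariant map $\pf(X) \to \pf(X)$, $E \mapsto E \cup \{x_0\}$ — wait, that is not $G$-equivariant unless $x_0$ is fixed; instead one uses the $\pf(X)\rtimes G$-invariant mean directly and pushes forward under $E \mapsto E \cup \{x_0\}$, or argues that the event $\{x_0 \in E\}$ already has full measure after conditioning. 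The cleanest route is to observe that $(\ref{Fourier:mean})$, which we already have, gives via the $\pf(X)$-translation action a mean supported on sets containing any prescribed finite set, since for finite $E_0$ the map $E \mapsto E \triangle E_0$ is a bijection of $\pf(X)$ commuting with $G$ only up to the $\pf(X)$-part, and $\pf(X)\rtimes G$-invariance lets us average; pushing forward the $\pf(X)\rtimes G$-invariant mean under $E \mapsto E \cup \{x_0\}$ yields a $G$-invariant mean giving full weight to $\{x_0 \in E\}$.

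The main obstacle I expect is the careful bookkeeping in the two ``transport through Fourier'' steps — in particular ensuring that the symmetrising projection $P$ in $(\ref{Fourier:05}) \Rightarrow (\ref{Fourier:ai})$ genuinely produces vectors that are (a) still $G$-almost-invariant, (b) nonzero with norm bounded below relative to $f_n$, and (c) have the right norm ratio on the cylinder — all three simultaneously, using the weight-$1/2$ hypothesis in an essential way; and dually, in $(\ref{Fourier:ai}) \Rightarrow (\ref{Fourier:mean})$, checking that combining $G$-almost-invariance with the cylinder-norm condition yields almost-invariance under the full generating set of $\pf(X)\rtimes G$ rather than just under $G$ and the single generator $x_0$. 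The transitivity of $G$ on $X$ is what bridges ``almost-invariant under $x_0$'' to ``almost-invariant under $\pf(X)$'': every generator $x$ of $\pf(X)$ is $g x_0 g^{-1}$ for some $g \in G$, so almost-invariance under $x_0$ and under $g$ gives it under $x$, though with constants depending on $g$ — which is harmless since the definition of an almost-invariant net only requires, for each fixed group element, the defect to tend to zero. Everything else (the two easy directions, the involution argument for weight $1/2$) is routine.
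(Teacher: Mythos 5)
Your steps (i)$\Rightarrow$(ii) (Fourier transform, almost-invariance under $\{x_0\}$ from the cylinder-norm condition, then transitivity/conjugation to get $\pf(X)$-almost-invariance and hence an invariant mean on the coset space) and (ii)$\Rightarrow$(iii) (the involution $E\mapsto E\triangle\{x_0\}$) are exactly the paper's arguments and are fine. The genuine content lies in the remaining two implications, and there your proposal has real gaps. For upgrading weight $1/2$ to full weight, neither of your suggestions works: the map $E\mapsto E\cup\{x_0\}$ is not $G$-equivariant and the event $\{E:x_0\in E\}$ is not $G$-invariant, so neither pushing forward nor conditioning an invariant mean $m$ (even a $\pf(X)\rtimes G$-invariant one) yields a $G$-invariant mean, and ``$\pf(X)\rtimes G$-invariance lets us average'' is not an argument. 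Concretely, invariance of $m$ under $E\mapsto E\triangle\{x_0\}$ shows that the pushforward under $E\mapsto E\cup\{x_0\}$ is $A\mapsto 2m(A\cap\{x_0\in E\})$, and its $G$-invariance would force $m(A\cap\{y\in E\})$ to be independent of $y\in X$ for all $A$ --- essentially assertion (iv) itself. The idea you are missing is the paper's boosting trick: by Reiter, (iii) is realised by $G$-almost-invariant probability measures $\mu$ on $\pf(X)$ with $\mu(\{x_0\in E\})=1/2$ (the weight condition survives convex combinations); since the union map $\pf(X)^k\to\pf(X)$ is $G$-equivariant, the pushforward of $\mu^{\otimes k}$ is again $G$-almost-invariant and gives mass at least $1-2^{-k}$ to $\{x_0\in E\}$, and letting $k\to\infty$ gives (iv).

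Your return route (iii)$\Rightarrow$(i) via the symmetrising projection $P=\tfrac12(I+T_{x_0})$, where $T_{x_0}$ is translation by $\{x_0\}$ on $\ell^2(\pf(X))$, is circular. Since $gPg^{-1}=\tfrac12(I+T_{g x_0})$, the defect $\|gP\xi-P\xi\|$ is controlled only when $\xi$ is almost invariant under $T_{x_0}T_{g x_0}$, i.e.\ under translations by finite sets --- precisely the $\pf(X)$-almost-invariance of (ii), which is what one is trying to establish; and without applying $P$, the weight-$1/2$ hypothesis only controls $\sum_{E\ni x_0}|\xi(E)|^2$, not the correlation $\langle\xi,T_{x_0}\xi\rangle$ which governs $\|P\xi\|$, so the ratio in (i) is merely bounded below by about $1/\sqrt2$ rather than tending to $1$. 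The paper sidesteps both problems by proving (iv)$\Rightarrow$(i) with an explicit construction: take $G$-almost-invariant probability measures $\mu$ giving \emph{full} weight to $\{x_0\in E\}$, supported on sets of a fixed cardinality $n(\mu)$, and set $f_\mu=2^{n(\mu)}\sum_{E}\mu(\{E\})1_{C_E}$, where $C_E$ is the cylinder $\{\omega_x=0\ \text{for all}\ x\in E\}$; then $f_\mu$ is supported in $\{\omega_{x_0}=0\}$ (so the ratio is exactly $1$), $\|gf_\mu-f_\mu\|_1\le\|g\mu-\mu\|_1$, and $f_\mu^{1/2}$ gives the net required in (i). The full-weight hypothesis is what makes the support condition exact, which is why the cycle must pass through (iv); a direct (iii)$\Rightarrow$(i) along your lines does not close.
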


\noindent
Again, $f_n\sbar_{\omega_{x_0}=0}$ denotes the function $f_n$ multiplied by the characteristic function of the cylinder set describing the elementary event $\omega_{x_0}=0$. The net $\{f_n\}$ can of course be chosen to be a sequence when $G$ (and hence $X$) is countable.

\begin{proof}[Proof of Lemma~\ref{lem:Fourier}]
(\ref{Fourier:ai})$\Longrightarrow$(\ref{Fourier:mean}).
The Fourier transform $\widehat{f_n}$ provides $G$-almost invariant vectors in $\ell^2(\pf(X))$. Morover, $\|f_n\sbar_{\omega_{x_0}=0}\|$ is the norm of the image of $\widehat{f_n}$ projected to the subspace of vectors in $\ell^2(\pf(X))$ that are invariant under $\{x_0\}$ viewed as group element in $\pf(X)$. Thus $\widehat{f_n}$ is $\{x_0\}$-almost invariant. Since the $G$-action is transitive, it follows that $\widehat{f_n}$ is $\pf(X)$-almost invariant as $n\to\infty$.

\smallskip
(\ref{Fourier:mean})$\Longrightarrow$(\ref{Fourier:05}).
The condition on $x_0$ follows from the invariance under $\{x_0\}$.

\smallskip
(\ref{Fourier:05})$\Longrightarrow$(\ref{Fourier:1}).
It suffices to show that for each $k\in\NN$ there are $G$-almost-invariant probability measures on $\pf(X)$ such that the collection of sets containing~$x_0$ has probability at least $1-2^{-k}$. By~(\ref{Fourier:05}), we have $G$-almost-invariant probability measures such that the collection of sets containing~$x_0$ has probability~$1/2$. Indeed, the classical proof of the ``Reiter property'' produces almost invariant probability measures as convex combinations of a net approximating an invariant mean in the weak-* topology, and our restriction about $x_0$ is preserved under convex combinations. If we take the union of $k$ independently chosen such finite sets, we obtain a distribution as required.

\smallskip
(\ref{Fourier:1})$\Longrightarrow$(\ref{Fourier:ai}).
The assumption implies that there are $G$-almost-invariant probability measures $\mu$ on $\pf(X)$ such that the collection of sets containing~$x_0$ has probability~$1$, making the same observation about Reiter's property as in (\ref{Fourier:05})$\Rightarrow$(\ref{Fourier:1}). We can assume that each $\mu$ is supported on a collection of sets of fixed cardinal $n(\mu)\in\NN$. We define a function $f_\mu$ on $\{0,1\}^X$ as follows. Given $E\in\pf(X)$, consider the cylinder set $C_E\se \{0,1\}^X$ consisting of all $\omega$ such that $\omega_x=0$ for all $x\in E$. We set $f_\mu=2^{n(\mu)}\sum_{E\in \pf(X)}  \mu(\{E\}) 1_{C_E}$, where $1_{C_E}$ is the characteristic function of $C_E$. Then $f_\mu$ is supported on $\{\omega_{x_0}=0\}$, has $L^1$-norm one and satisfies $\|g f_\mu - f_\mu\|_1\leq \|g \mu - \mu\|_1$ for all $g\in G$. Therefore, the function $f_\mu^{1/2}$ is as required by~(\ref{Fourier:ai}) as $\mu$ becomes increasingly invariant since $\|g f_\mu^{1/2} - f_\mu^{1/2}\| \leq \|g f_\mu - f_\mu\|_1^{1/2}$.
\end{proof}

\begin{proof}[Proof of Theorem~\ref{thm:wobbling}]
The sequence $\{f_n\}$ constructed in Section~\ref{sec:ai} satisfies the criterion~(\ref{Fourier:ai}) of Lemma~\ref{lem:Fourier} in view of Theorem~\ref{thm:estimate}. Therefore, the criterion~(\ref{Fourier:1}) provides the desired conclusion.
\end{proof}

The following is well-known.

\begin{lemma}\label{lem:amen}
Let $H$ be a group acting on a set $Y$ with an invariant mean. If the stabiliser in $H$ of every $y\in Y$ is an amenable group, then $H$ is amenable.
\end{lemma}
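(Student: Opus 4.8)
The plan is to use the standard characterization of amenability via invariant means on coset spaces, reducing the amenability of $H$ to that of the stabilizers together with the hypothesis that $Y$ carries an $H$-invariant mean.

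First I would decompose $Y$ into $H$-orbits, $Y = \bigsqcup_{i} Y_i$, and observe that an invariant mean on $Y$ restricts (after renormalizing on a set of positive measure, or by an ultralimit/Zorn argument) to yield an invariant mean on at least one orbit $Y_i$; in fact it is cleaner to work orbit by orbit, since a mean on $Y$ pushes forward to a mean on the set of orbits, and an invariant mean concentrated on a single orbit $Y_i \cong H/H_y$ is what we need. So without loss of generality $H$ acts transitively on $Y$ and $Y = H/H_y$ for a fixed $y$ with stabilizer $K := H_y$ amenable. Then the hypothesis gives an $H$-invariant mean $m$ on $H/K$.

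The key step is then the well-known fact that if a group $H$ admits an invariant mean on $H/K$ and $K$ is amenable, then $H$ is amenable. I would prove this by combining two means: let $m_0$ be an $H$-invariant mean on $\ell^\infty(H/K)$ and let $m_1$ be a $K$-invariant (left-)invariant mean on $\ell^\infty(K)$. Given $\fhi \in \ell^\infty(H)$, choose a section $s\colon H/K \to H$ of the projection $\pi\colon H\to H/K$, so that every $h\in H$ is written uniquely as $h = s(\pi(h))\,k$ with $k\in K$. For each coset $xK$ define the function $k \mapsto \fhi(s(xK)k)$ on $K$ and set $\Phi(xK) := m_1\big(k\mapsto \fhi(s(xK)k)\big)$; this $\Phi$ lies in $\ell^\infty(H/K)$, and I define $m(\fhi) := m_0(\Phi)$. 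A routine check — using right-invariance of $m_1$ on $K$ to absorb the discrepancy between $s(\pi(hx))$ and $h\,s(xK)$, which differ by an element of $K$ — shows that $m$ is a left-invariant mean on $\ell^\infty(H)$, hence $H$ is amenable. (Alternatively one can invoke the permanence property that amenability is preserved by extensions together with the fact that an invariant mean on $H/K$ for amenable $K$ forces the whole action on $H$ to be amenable; but the explicit two-mean construction is self-contained.)

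The main obstacle — though it is a mild one — is the bookkeeping in verifying left-invariance of the combined mean $m$: one must track how the section $s$ fails to be $H$-equivariant, i.e.\ that $h\,s(xK)$ and $s(hxK)$ differ by a cocycle value in $K$, and then exploit precisely the $K$-invariance of $m_1$ to cancel this. The only other point requiring a word of care is the reduction to the transitive case: a priori the invariant mean on $Y$ need not be supported on one orbit, but since the hypothesis on stabilizers is uniform over all of $Y$, it suffices to produce the mean on $H$ from the restriction to any single orbit carrying an invariant mean, and such an orbit exists because the induced mean on the (discrete) set of orbits $H\backslash Y$ is itself a mean, so pushing $\ell^\infty(H/K) \hookrightarrow \ell^\infty(Y)$ forward along any orbit of positive $m$-mass works.
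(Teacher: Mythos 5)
Your handling of the transitive case is correct: combining an $H$-invariant mean on $H/K$ with an invariant mean on the amenable stabiliser $K$ along a section, and absorbing the cocycle $s(hxK)^{-1}h\,s(xK)\in K$ by the invariance of the mean on $K$, is the standard proof that an amenable, co-amenable subgroup forces amenability of $H$; it is essentially the paper's barycenter argument written in coordinates (the whole point being the rather trivial bookkeeping you describe).

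The genuine gap is the reduction to a single orbit. You justify it by saying the pushforward of the invariant mean $m$ to the discrete orbit space $H\backslash Y$ must give some orbit positive mass, but a mean on an infinite discrete set can give every singleton (indeed every finite set) mass zero — e.g.\ a limit along a free ultrafilter on $\NN$ — so there may be no orbit of positive $m$-mass, and neither renormalising nor a Zorn/ultralimit appeal conjures an invariant mean on one orbit. Worse, the statement you actually need, that an invariant mean on $Y$ forces some \emph{single} orbit to carry an invariant mean, is not a general fact: decomposing almost-invariant $\ell^1$-measures over orbits only shows that for each finite $S\se H$ and $\epsilon>0$ \emph{some} orbit carries an $(S,\epsilon)$-invariant measure, and that orbit may vary as $(S,\epsilon)$ improves; under the hypotheses of the lemma the single-orbit statement does hold, but only because the lemma itself makes $H$ amenable, so invoking it would be circular. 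The repair is to not reduce at all: run your two-mean construction fibrewise over all of $Y$ simultaneously. Choose in each orbit a basepoint $y_i$ with stabiliser $K_i$, a section $s_i$ of the orbit, and an invariant mean $m_{K_i}$ on $K_i$; for $\fhi\in\ell^\infty(H)$ set $\Phi(y)=m_{K_i}\big(k\mapsto\fhi(s_i(y)k)\big)$ for $y$ in the $i$-th orbit. Your cocycle computation, unchanged, shows that $\Phi$ transforms equivariantly under $H$ (the cocycle never leaves the orbit), so applying the invariant mean on $Y$ to $\Phi$ gives a left-invariant mean on $H$. This is exactly the paper's proof: an $H$-equivariant map $Y\to\mathscr{M}(H)$ sending each point to a mean fixed by its stabiliser, push-forward of the invariant mean on $Y$, then barycenter.
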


\begin{proof}
The amenability of stabilisers implies that there is an $H$-map $Y\to \mathscr{M}(H)$ to the (convex compact) space $\mathscr{M}(H)$ of means on $H$ (by choosing for each $H$-orbit in $Y$ the orbital map associated to a mean fixed by the corresponding stabiliser). The push-forward of an invariant mean on $Y$ is an invariant mean on $\mathscr{M}(H)$. Its barycenter is an invariant mean on $H$. (An alternative argument giving explicit F\o lner sets can be found in the proof of Lemma~4.5 in~\cite{Glasner-Monod}.)
\end{proof}

The next proposition will leverage the fact that $\NN\triangle g(\NN)$ is finite for all $g\in W(\ZZ)$.

\begin{proposition}\label{prop:twist}
Let $G<W(\ZZ)$ be a subgroup such that the stabiliser in $G$ of $E\triangle \NN$ is amenable whenever $E\in\pf(\ZZ)$. Then $G$ is amenable.
\end{proposition}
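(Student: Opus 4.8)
The plan is to deduce Proposition~\ref{prop:twist} from Theorem~\ref{thm:wobbling} together with Lemma~\ref{lem:amen}. The key idea is to transport the invariant mean produced by Theorem~\ref{thm:wobbling} through a suitable $G$-equivariant map so that the stabilisers appearing in Lemma~\ref{lem:amen} are exactly the groups assumed amenable, namely the stabilisers of sets of the form $E\triangle\NN$ with $E\in\pf(\ZZ)$.

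First I would observe that $W(\ZZ)$ acts on the collection $\mathscr{Y}$ of subsets $Y\se\ZZ$ such that $Y\triangle\NN$ is finite; this is well-defined since $\ww{g}<\infty$ forces $g(\NN)\triangle\NN$ to be finite, hence $g(Y)\triangle\NN=(g(Y)\triangle g(\NN))\triangle(g(\NN)\triangle\NN)$ is finite whenever $Y\triangle\NN$ is. The map $E\mapsto E\triangle\NN$ is a $G$-equivariant bijection from $\pf(\ZZ)$ onto $\mathscr{Y}$ — provided the $G$-action on $\pf(\ZZ)$ is taken to be the affine one $g\cdot E = (g(E))\triangle(g(\NN)\triangle\NN)$ rather than the linear one $g\cdot E=g(E)$; concretely, $g(E\triangle\NN)=(g\cdot E)\triangle\NN$. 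So an invariant mean for the affine action of $G$ on $\pf(\ZZ)$ yields, by push-forward, an invariant mean for the $G$-action on $\mathscr{Y}$, whose point stabilisers are precisely the stabilisers of the sets $E\triangle\NN$, which are amenable by hypothesis. Lemma~\ref{lem:amen} then gives that $G$ is amenable.

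It remains to produce a $G$-invariant mean for the \emph{affine} action on $\pf(\ZZ)$. Here is where Theorem~\ref{thm:wobbling} enters: it provides a $W(\ZZ)$-invariant, hence $G$-invariant, mean $m$ on $\pf(\ZZ)$ for the \emph{linear} action, giving full weight to the sets containing any prescribed finite set. The affine and linear actions differ by the cocycle $g\mapsto g(\NN)\triangle\NN\in\pf(\ZZ)$; equivalently, they are the two restrictions to $\pf(\ZZ)$ of the $\pf(\ZZ)\rtimes G$-action discussed before Lemma~\ref{lem:Fourier}. The cleanest way to convert $m$ into an affine-invariant mean is to use that $m$ is already invariant under translations by elements of $\pf(\ZZ)$ in the appropriate asymptotic sense — but more directly, one can take $f_n$ as in Section~\ref{sec:ai}, restrict attention to the subgroup $\pf(\ZZ)\rtimes G$, and note that the almost-invariant vectors $\widehat{f_n}\in\ell^2(\pf(\ZZ))$ furnished by Lemma~\ref{lem:Fourier}(\ref{Fourier:mean}) are almost invariant under all of $\pf(\ZZ)\rtimes G$. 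Their weak-$*$ limit is a mean invariant under $\pf(\ZZ)\rtimes G$, and restricting the invariance to the "affine" copy of $G$ inside $\pf(\ZZ)\rtimes G$ gives exactly the affine-invariant mean we need.

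The main obstacle, and the point requiring care, is the bookkeeping of which action is which: the linear action $g\cdot E=g(E)$, the affine action $g\cdot E=g(E)\triangle(g\NN\triangle\NN)$, and the $\pf(\ZZ)$-translation action must be matched so that the conjugation by $E\mapsto E\triangle\NN$ really does intertwine the affine $G$-action with the $G$-action on $\mathscr{Y}$, and so that Theorem~\ref{thm:wobbling} (which a priori only speaks of the linear action) genuinely yields the affine-invariant mean. Once this is set up correctly, the only remaining check is that the full-weight statement in Theorem~\ref{thm:wobbling} (for arbitrary finite $E$) is enough to guarantee the push-forward mean on $\mathscr{Y}$ is a genuine mean — which is automatic since any push-forward of a mean is a mean — and then Lemma~\ref{lem:amen} closes the argument.
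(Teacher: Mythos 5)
Your proposal is correct and is essentially the paper's own argument: the ``affine'' $G$-action $g\cdot E=g(E)\triangle(g(\NN)\triangle\NN)$ you use is exactly the paper's twisted embedding $\iota(g)=\big(\NN\triangle g(\NN),g\big)$ into $\pf(\ZZ)\rtimes G$, the invariant mean comes from condition~(\ref{Fourier:mean}) of Lemma~\ref{lem:Fourier} (not from the full-weight part of Theorem~\ref{thm:wobbling}, which indeed is not needed here), and the conclusion follows from Lemma~\ref{lem:amen} after identifying the point stabilisers with the stabilisers of the sets $E\triangle\NN$. Your extra step of conjugating by $E\mapsto E\triangle\NN$ onto the collection of sets almost equal to $\NN$ is only a cosmetic repackaging of the same stabiliser computation.
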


\begin{proof}
As noted in the proof of Theorem~\ref{thm:wobbling}, the $W(\ZZ)$-action on $\ZZ$ satisfies the equivalent conditions of Lemma~\ref{lem:Fourier} thanks to Theorem~\ref{thm:estimate}. In particular, there is a $\pf(\ZZ)\rtimes G$-invariant mean on $\pf(\ZZ)$. Thus, in view of Lemma~\ref{lem:amen}, it suffices to find an embedding $\iota\colon G\to \pf(\ZZ)\rtimes G$ in such a way that the stabiliser in $\iota(G)$ of any finite set $E$ is the stabiliser in $G$ of $E\triangle \NN$. The map defined by $\iota(g) = \big(\NN\triangle g(\NN), g\big)$ has the required properties.
\end{proof}

\section{From Cantor systems to piecewise translations}
It is known that the stabiliser of a forward orbit in the topological full group of a minimal Cantor system is locally finite~\cite{Giordano-Putnam-Skau99}. The corresponding more general situation for the group $W(\ZZ)$ is described in the following two lemmas.

\medskip
A subgroup $G$ of $W(\ZZ)$ has the \emph{ubiquitous pattern property} if for every finite set $F\se G$ and every $n\in \NN$ there exists a constant $k=k(n,F)$ such that for every $j\in\ZZ$ there exists $t\in\ZZ$ such that $[t-n,t+n]\se [j-k,j+k]$ and such that for every $i\in [-n,n]$ and every $g\in F$ we have $g(i)=g(i+t)$.

Informally: the partial action of $F$ on $[-n,n]$ can be found, suitably translated, within any interval of length $2k+1$.

\begin{lemma}\label{lem:loc_fin}
Let $G<W(\ZZ)$ be a subgroup with the ubiquitous pattern property. Then the stabiliser of $E\triangle \NN$ in $G$ is locally finite for every $E\in \pf(\ZZ)$.
\end{lemma}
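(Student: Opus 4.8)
The plan is to show that every finitely generated subgroup $H$ of the stabiliser of $E\triangle\NN$ in $G$ is finite. Fix a finite generating set $F$ of $H$; by enlarging $F$ we may assume it is symmetric and contains the identity. The key observation is that an element $h\in H$ fixes the set $E\triangle\NN$, which differs from $\NN$ in only finitely many places; hence $h$ maps $\NN$ into a fixed finite-error neighbourhood of $\NN$, and combined with $\ww{h}<\infty$ this forces the ``defect region'' where $h$ does not simply fix points to lie in a bounded window. More precisely, let $m$ be large enough that $E\se[-m,m]$ and that $\ww{f}\leq m$ for all $f\in F$; then for any word $h$ in $F$ of length $\ell$ we have $\ww{h}\leq \ell m$, so $h$ can only move points within distance $\ell m$ of the ``boundary'' region $[-m,m]$, and outside $[-(\ell+1)m,(\ell+1)m]$ the element $h$ acts as the identity on $\NN$ and on $\ZZ\setminus\NN$ separately (using that $h$ preserves $E\triangle\NN$ and hence, away from $[-m,m]$, preserves $\NN$).

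The main step is then to invoke the ubiquitous pattern property to deduce that $h$ is in fact determined by its restriction to a bounded interval. Apply the property with the finite set $F$ and with $n$ chosen larger than $(\ell+1)m$ for whatever length $\ell$ is under consideration; this yields $k=k(n,F)$ such that the partial action of $F$ on $[-n,n]$ recurs, up to translation, inside every window of radius $k$. I would argue that if $h\in H$ were a word of length $\ell$ in $F$ acting nontrivially, the nontriviality is witnessed on the interval $[-n,n]$ (since $h$ is supported there), and by ubiquity the same local pattern, hence the same nontrivial behaviour, is reproduced near arbitrarily distant translates $t$; but near such a distant translate $h$ must also be the identity (the defect region of $h$ is the single bounded window around the origin). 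This contradiction shows $h$ is supported in a single bounded interval whose size depends only on $\ell$ and $F$.

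Finally, to conclude finiteness: since $H$ is generated by $F$, every $h\in H$ is a word of some length in $F$, but an iterated composition of elements of $F$ cannot have its support grow without bound if each element of $H$ is supported in a window whose size is forced — here I would either bound the word length directly (any $h\in H$ supported on $[-N,N]$ with each $h(j)$ within distance $\ww{h}$ of $j$ lies in the finite symmetric group on a bounded set once we know $h$ is the identity outside a uniformly bounded interval) or, cleaner, observe that the argument of the previous paragraph in fact shows the support of every element of $H$ lies in one fixed interval $[-N,N]$ independent of the word, so $H$ embeds in the finite group of permutations of $[-N,N]$. The substantive point — and the step I expect to be the main obstacle — is the careful bookkeeping in the ubiquity argument: making precise that ``the nontrivial behaviour of $h$ appears on $[-n,n]$'' and that ubiquity forces a copy of this behaviour far from the origin where $h$ is already known to be trivial, which requires choosing $n$ correctly relative to both $\ww{h}$ and the finite set $E$, and tracking that the translated pattern genuinely reconstructs $h$ and not merely $F$.
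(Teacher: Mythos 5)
Your argument breaks down at the very first step, and the failure is fatal rather than a bookkeeping issue. From $h(E\triangle\NN)=E\triangle\NN$ and $\ww{h}<\infty$ you conclude that outside a bounded window $h$ ``acts as the identity on $\NN$ and on $\ZZ\setminus\NN$ separately.'' Preserving a set is much weaker than fixing it pointwise: a stabiliser element may permute $\NN$ within itself, with bounded displacement, arbitrarily far from the origin. Concretely, take $E=\varnothing$ and let $h$ swap $2j\leftrightarrow 2j+1$ for every $j\in\ZZ$; then $h\in W(\ZZ)$, $h(\NN)=\NN$, the group $\{1,h\}$ has the ubiquitous pattern property (any even $t$ reproduces the pattern), yet $h$ moves every integer. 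So your intermediate claim that every element of the stabiliser has bounded support is false, and with it the whole strategy: the ubiquity step was only used to contradict ``$h$ is trivial far away,'' which is not available, and the final conclusion that $H$ embeds into the symmetric group of a single interval $[-N,N]$ would force the stabiliser to consist of finitely supported permutations, which fails already for the example above and for stabilisers of forward orbits in topological full groups (these are infinite locally finite groups).

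The paper's proof goes in a genuinely different direction: it never tries to bound supports. Instead, with $c=\max\{|e|:e\in E\}$ and $m=\max\{\ww{g}:g\in F\}$, it uses the ubiquitous pattern property with $n=c+2m$ to find, inside every interval $I_i$ of length $2k+1$, a translated copy $E_i$ of the ``phase transition'' pattern $E_0=(E\triangle\NN)\cap[-c-2m,c+2m]$ on which $F$ acts exactly as it does on $E_0$. It then splices consecutive copies together, setting $B_i=\bigl(E_i\cup[\max(E_i)+1,\max(E_{i+1})]\bigr)\setminus E_{i+1}$, and checks that $\ZZ=\bigsqcup_i B_i$ with each $B_i$ an $F$-invariant set of cardinality at most $4k+2$. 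This exhibits $\langle F\rangle$ as a subgroup of a (possibly infinite) power of a finite symmetric group, hence finite. If you want to salvage your write-up, you should abandon the ``bounded support'' claim and instead look for uniformly bounded $F$-invariant pieces covering all of $\ZZ$; the role of ubiquity is to manufacture those pieces everywhere, not to produce a contradiction.
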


\begin{proof}
Let $E\in \pf(\ZZ)$ and $F$ be a finite set of elements of the stabiliser of $E\triangle \NN$ in $G$. In order to prove that the set $F$ generates a finite group it is sufficient to show that $\ZZ$ is a disjoint union of finite sets $B_i$ of uniformly bounded cardinality such that each of this sets is invariant under the action of $F$, since this will realize the group generated by $F$ as a subgroup of a power of a finite group. We will achieve this by taking the $B_i$ to be the ubiquitous translated copies of  the ``phase transition'' region of $E\triangle \NN$, suitably identifying the ``top part'' of $E\triangle \NN$ with the ``bottom part'' of the complement of the next translated copy.

\smallskip
Let $c=\max\{|e|: e\in E\}$ (with $c=0$ if $E=\varnothing$). Consider the interval $[-c-2m, c+2m]$, where $m=\max \{\ww{g}:g\in F\}$. Let $k=k(c+2m, F)$ be the constant from the definition of the ubiquitous pattern property. Denote $E_0=E\triangle\NN\cap [-c-2m, c+2m]$. Consider $\ZZ$ as disjoint union of consecutive intervals $I_i$ ($i\in\ZZ$) of length $2k+1$ such that $[-c-2m, c+2m]\se I_0$. Then, by the ubiquitous pattern property, for each interval $I_i$ there exists a set $E_i\se I_i$ (a translate of $E_0$) such that the action of $F$ on $E_i$ coincides with the action of $F$ on  $E_0$. Let
$$B_i=\Big(E_i\cup [\max(E_i)+1,\max(E_{i+1})]\Big) \setminus E_{i+1}.$$
It is easy to see that $\ZZ=\bigsqcup B_i$ and that each $B_i$ is $F$-invariant. Moreover, since $B_i\se I_i\cup I_{i+1}$, we have $|B_i|\leq 4k+2$ for all $i$.
\end{proof}

Let $T$ be a homeomorphism of a Cantor space $C$ and choose a point $p\in C$. If $T$ has no finite orbits, then we can define a map
$$\pi_p\colon \full{T} \longrightarrow W(\ZZ)$$
by the requirement
$$g(T^j p) = T^{\pi_p(g)(j)} p,\kern5mm (g\in\full{T}, j\in \ZZ).$$
The map $\pi_p$ is a group homomorphism and is injective if the orbit of $p$ is dense.

\begin{lemma}\label{lem:ubi}
If $T$ is minimal, then the image $\pi_p(\full{T})$ of the injective homomorphism $\pi_p$ has the ubiquitous pattern property.
\end{lemma}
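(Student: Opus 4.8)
The plan is to unwind the definitions and exploit minimality of $T$ to produce the required uniform constant $k(n,F)$. Fix a finite set $F\se\full{T}$ and $n\in\NN$. Writing $g_1,\dots,g_r$ for the elements of $F$, each $g_s$ is by definition of the topological full group given piecewise by powers of $T$: there is a clopen partition $C=\bigsqcup_\ell U_{s,\ell}$ and integers $m_{s,\ell}$ with $g_s|_{U_{s,\ell}}=T^{m_{s,\ell}}$. The partial action of $F$ on $\{-n,\dots,n\}\subseteq\ZZ$ (under $\pi_p$) is completely determined by the finite amount of combinatorial data: for each $i\in[-n,n]$ and each $s$, the index $\ell$ such that $T^i p\in U_{s,\ell}$. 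So the ``pattern'' of $F$ on $[-n,n]$ is encoded by which cell of the common refinement $\mathscr{U}$ of all the partitions $\{U_{s,\ell}\}$ each of the points $T^{-n}p,T^{-n+1}p,\dots,T^n p$ lands in.

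The key step is then: translating the partial pattern on $[-n,n]$ to be centered at some $t\in\ZZ$ amounts to requiring that $T^{t+i}p$ lies in the same cell of $\mathscr{U}$ as $T^i p$, for every $i\in[-n,n]$; and if this holds, then automatically $g_s(t+i)\equiv g_s(i)$ in the required sense, i.e. $\pi_p(g_s)(i)=\pi_p(g_s)(i+t)$ for all $i\in[-n,n]$, $s$. (One must be slightly careful: $g(i)=g(i+t)$ literally cannot hold as integers; the statement in Lemma~\ref{lem:ubi}'s hypothesis should be read, and I would phrase it, as saying the two partial bijections agree after the translation by $t$, i.e. $g(i+t)-t=g(i)$; this is exactly what the displayed calculations of Lemma~\ref{lem:loc_fin} actually use. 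I would make that reading explicit.) So it suffices to find, inside every window $[j-k,j+k]$, some $t$ with $[t-n,t+n]\se[j-k,j+k]$ realizing the prescribed itinerary $(T^{-n}p,\dots,T^n p)$ through the cells of $\mathscr{U}$.

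This is precisely where minimality enters, via the standard fact that a minimal homeomorphism is (uniformly) recurrent: for the clopen neighborhood
$$V=\bigcap_{i=-n}^{n} T^{-i}\big(\text{cell of }\mathscr{U}\text{ containing }T^i p\big),$$
which is a nonempty clopen set, minimality of $T$ implies that the return times form a syndetic subset of $\ZZ$ — equivalently $\bigcup_{|q|\le N} T^{-q}V = C$ for some $N$ (by compactness, since the clopen sets $T^{-q}V$ cover $C$ as $T$ is minimal). Set $k=N+n$. Then for any $j\in\ZZ$, the point $T^j p$ lies in some $T^{-q}V$ with $|q|\le N$, so $t:=j+q$ satisfies $T^t p\in V$, hence $T^{t+i}p$ lies in the same cell of $\mathscr{U}$ as $T^i p$ for all $i\in[-n,n]$, and $|t-j|\le N$ gives $[t-n,t+n]\se[j-k,j+k]$. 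This $k$ depends only on $n$ and $F$, as required.

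The only real obstacle is bookkeeping rather than mathematics: one must check that $\pi_p$ is well-defined and a homomorphism (which the text asserts, under the no-finite-orbit hypothesis — note a minimal Cantor system on an infinite space has no finite orbits), and one must nail down the precise sense in which ``$g(i)=g(i+t)$'' is meant, reconciling the clean statement with the translation actually performed in Lemma~\ref{lem:loc_fin}. I would state that reading once at the start of the proof and then the argument above goes through verbatim. A secondary minor point is verifying that the common refinement $\mathscr{U}$ is a finite clopen partition — immediate since each $g_s$ uses a finite partition — so that $V$ is genuinely clopen and nonempty (it contains $p$), which is what makes the compactness/minimality covering argument applicable.
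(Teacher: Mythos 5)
Your proof is correct, and it reaches the conclusion by a route that is organized differently from the paper's. You argue directly: the pattern of $F$ on $[-n,n]$ is recorded by the clopen cylinder $V=\bigcap_{i=-n}^{n}T^{-i}(\text{cell containing }T^ip)$, which contains $p$; minimality makes $\{T^{-q}V\}_{q\in\ZZ}$ an open cover of $C$, compactness extracts a finite subcover, and the resulting syndeticity constant $N$ gives the explicit bound $k=N+n$. The paper instead argues by contradiction: assuming the property fails along intervals $[j_k-k,j_k+k]$, it forms the closed sets $M_k$ of points whose $[-k,k]$-window contains no matching translate, notes $T^{j_k}p\in M_k$, and obtains a nonempty closed invariant intersection avoiding $p$, contradicting minimality. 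These are dual packagings of the same underlying fact (uniform recurrence of a minimal homeomorphism into a nonempty clopen set): the paper uses compactness via nested nonempty closed sets and never names $V$, while you invoke the standard syndetic-return-times lemma and get a constructive $k$. Your version is arguably cleaner and makes the dependence $k=k(n,F)$ explicit; the paper's avoids quoting any auxiliary recurrence statement. Two of your side remarks are also well taken and worth keeping: the literal clause ``$g(i)=g(i+t)$'' in the definition must be read as $g(i+t)=g(i)+t$ (this translated-pattern reading is exactly what the proof of Lemma~\ref{lem:loc_fin} uses), and the hypothesis of no finite orbits needed for $\pi_p$ is automatic for a minimal homeomorphism of a Cantor space, so $\pi_p$ is well defined and injective on the dense orbit of $p$.
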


\begin{proof}
For every $g\in \full{T}$ the sets $C_{g,i}= \{q : g(q) = T^i q\}$ define a clopen partition $C=\bigsqcup_{i\in\ZZ} C_{g,i}$ with all but finitely many $C_{g,i}$ empty. Suppose that the property fails. Then there is a finite set $F\se \full{T}$, an integer $n\in\NN$ and a sequence $\{j_k\}_{k\in\NN}$ in $\ZZ$ such that the none of the intervals $[j_k-k, j_k+k]$ contains any translated copy of the partial action of $\pi_p(F)$ on $[-n,n]$. Rephrased in $C$, this means the following. For every $t$ with $[t-n, t+n]\se[j_k-k, j_k+k]$, there is $g\in F$ such that the partition of $[t-n, t+n]$ induced by intersecting the $C_{g,i}$ with $\big\{ T^r p : r\in [t-n, t+n]\big\}$ is different from the partition that they induce on $[-n, n]$.

Consider now the set $M_k$ of all points $q\in C$ such that for every $t$ with $[t-n, t+n]\se[-k, k]$ there is $g\in F$ such that the partition of $[t-n, t+n]$ induced by intersecting the $C_{g,i}$ with $\big\{ T^r q : r\in [t-n, t+n]\big\}$ is different from the partition induced on $[-n, n]$. The set $M_k$ is non-empty because, in view of the previous observation, it contains $q=T^{j_k} p$. On the other hand, the successive $M_k$ form a decreasing sequence of closed subsets of $C$. Therefore, the intersection of all $M_k$ is a non-empty closed set. It is invariant by construction, but does not contain $p$ since $p\notin M_k$ as soon as $k\geq n$. This contradicts the minimality.
\end{proof}

\begin{proof}[Proof of Theorem~\ref{thm:main}]
By Lemma~\ref{lem:ubi}, the (injective image of the) topological full group $\full{T}$ has the ubiquitous pattern property. Therefore, Lemma~\ref{lem:loc_fin} shows that the stabiliser of $E\triangle \NN$ in $G$ is amenable for every $E\in \pf(\ZZ)$. Now Proposition~\ref{prop:twist} shows that $\full{T}$ is amenable.
\end{proof}

\bibliographystyle{../../BIB/amsalpha}
\bibliography{../../BIB/ma_bib}

\end{document}